\tikzset{
	photon/.style={decorate, decoration={snake}, draw=red},
	electron/.style={draw=blue, postaction={decorate},
		decoration={markings,mark=at position .55 with {\arrow[draw=blue]{>}}}},
	gluon/.style={decorate, draw=magenta,
		decoration={coil,amplitude=4pt, segment length=5pt}} 
}
\def \d{\mathrm{d}} 
\newtheorem{theorem}{Theorem}
\newtheorem{definition}[theorem]{Definition}
\newtheorem{lem}[theorem]{Lemma}
\newtheorem{corol}[theorem]{Corollary}
\newtheorem{remark}[theorem]{Remark}
\def\I{\mathsf{1}}
\def \N{\mathbb{N}}
\def\my_c{c_\infty}
\newcommand{\mynewtheorem}[2]{
	\newaliascnt{#1}{dummy}
	\newtheorem{#1}[#1]{#2}
	\aliascntresetthe{#1}
	% maybe we will squish some autoref defaults, but who cares?
	\expandafter\def\csname #1autorefname\endcsname{#2}
}
\newcommand{\be}{\begin{equation}}
	\newcommand{\ee}{\end{equation}}
\newcommand{\bde}{\begin{displaymath}}
	\newcommand{\ede}{\end{displaymath}}
\newcommand{\beq}{\begin{eqnarray*}}
	\newcommand{\eeq}{\end{eqnarray*}}
\newcommand{\beqa}{\begin{eqnarray}}
	\newcommand{\eeqa}{\end{eqnarray}}
\newcommand{\bel }{\left\{\begin{array}{ll}}
	\newcommand{\eel}{\cr \end{array} \right.}
\newcommand{\seq}[1]{{\lbrace #1 \rbrace}}
\newcommand{\E}{\mathbb{E}}
\def\rr{{\mathbb R}}
\def\P{{\mathbb P}}
\def\0{{\mathbf{0}}}
\begin{document}

	\title[IBP formula for exit times]{Integration by parts formula for exit times of one dimensional diffusions}\author{Noufel Frikha, Arturo Kohatsu-Higa and Libo Li}
	\thanks{The research of A. Kohatsu-Higa was supported by KAKENHI grant 20K03666. He would like to express his thanks to the organizers of the meeting ``Kolmogorov Operators and Their Applications'' for their kind invitation to contribute to this volume.  The research of N. Frikha has benefited from the support of the Institut Europlace de Finance.\\
		Noufel Frikha.  Universit\'e Paris 1 Panth\'eon-Sorbonne, Centre d'Economie de la Sorbonne, 106 Boulevard de l'H\^{o}pital, 75642 Paris Cedex 13, France. E-mail: noufel.frikha@univ-paris1.fr.\\
		Arturo Kohatsu-Higa.  Department of Mathematical Sciences
		Ritsumeikan University, 1-1-1
		Nojihigashi, Kusatsu, Shiga, 525-8577, Japan. E-mail: khts00@fc.ritsumei.ac.jp.\\
		Libo Li. University of New South Wales, School of Mathematics and Statistics. Sydney NSW 2052 Australia.  E:mail: libo.li@unsw.edu.au}
	%\date{}
	
	\begin{abstract}
		In line with the methodology introduced in \cite{frikha:kohatsu:li} for formulating probabilistic representations of integration by parts involving killed diffusion, we establish an integration by parts formula for the first exit time of one-dimensional diffusion processes. However, our approach diverges from the conventional differential calculus applied to the associated space Markov chain; instead, we employ calculus techniques that focus on the underlying time variables.  
	\end{abstract}
	\maketitle
	\section{Introduction}

	In this article, we consider the process denoted as $X$, which is defined as the unique strong solution to the following one dimensional stochastic differential equation (SDE for short) with the following dynamics:
	\begin{align}
		\label{sde:dynamics}
		X_t=x+\int_0^tb(X_s) \, \d s+\int_0^t\sigma(X_s)\, \d W_s,
	\end{align}
	\noindent where the coefficients $b, \sigma: \mathbb{R} \rightarrow \mathbb{R}$ are smooth and bounded functions and $(W_t)_{t\geq0}$ denotes a standard one-dimensional Brownian motion within a given filtered probability space $(\Omega, \mathcal{F}, (\mathcal{F}_t)_{t\geq0}, \mathbb{P})$. 
	
	The aim of the present paper is to derive an integration by parts (IBP) formula for the first hitting time of a given level $L$ by the process $X$. To elaborate further, for a fixed starting point $x\geq L$, we define $\tau$ as the first time $X$ crosses the level $L$: $\tau= \inf\{t \geq 0: X_t=L\}$. For any given finite time horizon $T>0$, our focus here is on establishing a explicit probabilistic representation for the IBP formula pertaining to the quantity:
	$$
	\mathbb{E}[f'(\tau \wedge T)]  
	$$
	
	\noindent where $f$ is a real-valued smooth function defined on $\mathbb{R}_+$.

	%	Integration by parts (IBP) formulas for random variables on Wiener space are useful as they provide means to obtain theoretical as well as applied results. 
	%In this article, we study an IBP formula for the exit time of a one dimensional difusion process with smooth coefficients from an interval of the type $ [L,\infty) $. 
	
	Numerous theoretical properties concerning these exit times have been documented. While not exhaustive, one can refer, for instance, to \cite{kent1980eigenvalue} and \cite{kent1982spectral}. Nonetheless, as mentioned in \cite{pitmanyor}:
	``The distribution of the exit time can be expressed using the sum of an infinite sequence of independent exponential variables. While such expansions are of interest in a number of contexts, the corresponding representations of the density or cumulative distribution function of the exit time can be difficult to work with''.

	The application of Malliavin Calculus to random variables associated with exit times remains somewhat fragmented. In \cite{Bismut} and \cite{BAKS}, the authors establish certain regularity properties of the stopped diffusion process at the boundary of a general smooth domain. In contrast, in \cite{AM}, the authors prove that, in general, exit times can not be Malliavin differentiable even once. Therefore, to the best of our knowledge, the pursuit of an integration by parts formula for exit times using Malliavin Calculus appears to be a daunting task.
	
	In this article, we first establish a probabilistic representation formula in the following format
	\begin{align*}
		\E[f(\tau \wedge T)]=\E[f(\bar{\tau} \wedge T)\prod_{i=1}^{N_T+1}\theta^i].
	\end{align*} 
	Here $ \bar{\tau} $ is an approximation of the exit time $ \tau $ and the sequence of random variables $(\theta^i)_{1\leq i \leq N_T+1}$ is derived from a Markov chain that approximates the trajectory of the underlying diffusion process. Additionally, $(N_t)_{0\leq t\leq T}$ represents a Poisson process, and $f$ is a bounded measurable function. The proof employs similar arguments as those utilized in \cite{frikha:kohatsu:li}.
	
	Our starting point in establishing our IBP formula is based on the probabilistic representation formula mentioned earlier. The representation proposed in this context is associated with a general perturbation theory of Markov processes, commonly employed for investigating the smoothness properties of probability density functions. This theory, which can be seen as an intermediate step between approximations and their limits, has been extensively developed in various directions. without being exhaustive, references to this theory can be found in works such as \cite{FP}, \cite{ballybook} and \cite{FKL1, 10.1214/19-AIHP992, frikha:kohatsu:li, chen:frikha:li, CHAUDRUDERAYNAL20221, CHAUDRUDERAYNAL20211}.

	To derive IBP formulas in our context, we initially observe that the problem bears a certain 'dual' relationship with the one pertaining to one-dimensional killed diffusion processes.  In a loose sense, space and time variables undergo an interchange. With this perspective in mind, our approach involves considering the conditional distribution of the underlying time variables with respect to the space variables. Subsequently, we apply the IBP formulas with respect to the time variables. Given that these formulas involve differentiation of the time random variables, we also anticipate that the degree of degeneracy will be more pronounced compared to the case of killed diffusion.
	
	When we carry out the conditioning process with respect to the space variables as described above, the distribution of the jump times undergoes a transformation from exponential to generalized inverse Gaussian. This change occurs because, in addition to the exponential density, the spatial Gaussian transition densities must also be taken into account as components of the conditional distribution of the jump times. Consequently, based on the preceding discussion, it becomes necessary to introduce dual operators that are associated with the jump times of Poisson processes conditioned on space variables.

	The literature regarding integration by parts formulas concerning the jump times of a Poisson process is extensive. In all cases known to the authors, the general approach involves employing IBP formulas with respect to exponential distributions, which may inherently introduce boundary conditions. Dealing with these terms can pose difficulties, necessitating the use of localization techniques centered around the boundary values of these time random variables to address this technical challenge. For further insights into this topic, please consult references such as \cite{Denis}, \cite{BallyBavouzet}, and \cite{BallyClement}.

	In the current configuration, we do not seek to circumvent these boundary terms; rather, we include them in our calculations. Since the calculation is carried out conditionally, based on the values of space variables, within a random partition of the time interval-assumed, without loss of generality, to be all distinct-we ascertain that all boundary terms will ultimately disappear. 
	
	Lastly, following a similar approach to our companion paper \cite{FKL1}, where we employed a time stabilization argument for stopped processes, in this article, we employ a spatial stabilization technique. This approach leads to a Malliavin variance quantity that relies on the squares of distances between points within the underlying Markov chain structure. The central result of this paper is presented in Theorem \ref{th:8}, where we establish the IBP formula for the first hitting time of $X$. We achieve this using the probabilistic representation derived in Theorem \ref{th:st1}
	
	%		Some notation used in this section is the same as in classical Malliavin calculus  although the definitions are different. 
	%		
	%		Let $ \tau_1 $ be an exponential random variable with parameter $ \lambda $. We define for a function $ G\in C^1_b(\mathbb{R}_0) $, the r.v. $ G\equiv G(\tau_1) $
	%		\begin{align*}
	%		{{\mathcal{I}}}_{1}(G):=&G\lambda-D_{1} G\\
	%		D_{{1}}G=G'.
	%		\end{align*}
	%		
	%		
	%		 With these definitions we have the following IBP formula for $ 0\leq a<b $ and $ f\in C^1_b $:
	%		\begin{align}
	%		\label{eq:timeIBP}
	%		\E\left[f'({\tau_1})G({\tau_1})I_\seq{a<\tau_1<b}\right]=\E\left[f({\tau_1}){{\mathcal{I}}}_{1}(G)\right]+E\left[f({\tau_1})G(\tau_1)\lambda^{-1}(\delta_a({\tau_1})-\delta_b({\tau_1}))\right].
	%		\end{align}
	%		As usual $ \delta_a(\tau_1) $ denotes the Dirac delta distribution function where the density function of $ {\tau_1} $ is extended locally around zero by continuity. 
	%		Again as in the previous section, we hope that the use of the symbol $ {\mathcal{I}}_1 $ as the dual operator of the derivative operator $ D_1 $ will not raise confusion as the context will always make clear what object we are discussing.
	%		
	
	\vskip2pt
	\noindent 	{\it Notations:  }
	We use $ A\stackrel{\E}{=} B$ to denote that $ \E[A]=\E[B] $. We will be working on a Wiener space with all necessary additional independent r.v.'s added to it. Products $\prod_{j=i}^k\theta^j $ are  defined to be $ 1 $ when $ i>k $. We also use $ \bar{\mathbb{N}}=\mathbb{N}\cup\{0\} $ and $ \bar{\mathbb{N}}_{n+1}=\{0,...,n+1\} $.
	
	%	We recall some of the basic concepts related to simplified Malliavin Calculus.
	
	\section{Preliminaries}
	\label{sec:pre}

	\subsection{Assumptions}
	Throughout the article, we work on the filtered probability space $(\Omega, \mathcal{F}, (\mathcal{F}_t)_{t\geq0}, \mathbb{P})$ which is assumed to be rich enough to support all random variables that will be considered in what follows. In addition, we will suppose that the following assumptions on the coefficients are in force:
	\subsection*{Assumption (H)}
	\begin{itemize}
		\item[(i)] The coefficients of SDE \eqref{sde:dynamics} are smooth, bounded with bounded derivatives. That is, $b\in \mathscr{C}^{\infty}_b(\rr)$ and $ a:=\sigma^2 \in \mathscr{C}^{\infty}_b(\rr)$. 
		\item[(ii)] The function $\sigma$ is uniformly elliptic and bounded. That is, there exist $\underline{a}, \overline{a}>0$ such that for any $x\in \rr$, $\underline{a} \leq a(x)\leq \overline{a}$.
	\end{itemize}
	
	\subsection{A reflection principle}

	As in \cite{FKL1}, our probabilistic representation involves the following approximation process 
	\begin{align}
		\label{eq:defbX}
		\bar{X}^{s, x}_{t}\equiv \bar{X}^{s, x}_{t}(\rho) = \rho x+(1-\rho)(2L-x)+\sigma(x)(W_t-W_s),
	\end{align} 
	\noindent where $ \rho $ is a Bernoulli random with parameter $1/2$, independent of $ W $, namely $\P(\rho=1) = \P(\rho=0) = 1/2 $. 
	This random variable is related to a reflection principle. 
	We may use the simplified form $ \bar{X}_t$ for $\bar{X}^{0,x}_{t} $. The above approximation process $ \bar{X} $ is the main building block in the forthcoming probabilistic representation formula of the couple $(\tau \wedge T, X_{\tau\wedge T})$. The next lemma follows from the reflection principle, see e.g. Karatzas and Shreve \cite{Karatzas1991}, or from the explicit density of the Brownian motion killed at level $L$ given for $ x, y >L$ by  
	\begin{align*}
		\frac{1}{\sqrt{2\pi \sigma^2 T}}\left(\exp\left(-\frac{(y-x)^2}{2 \sigma^2 T}\right) - \exp\left(-\frac{(y-(2L-x))^2}{2 \sigma^2 T}\right)  \right).
	\end{align*}
	\begin{lem} 
		\label{lem:1}Consider the process
		\begin{align*}
			\bar{Y}_t=x+\sigma(x)W_t, \quad x\geq L,
		\end{align*}
		together with its hitting time $ \bar{\tau}:=\inf\left\{t \geq 0 :\bar{Y}_t=L\right\} $. Then, for any bounded and measurable function $f$, it holds that
		\begin{align}
			\E\left[f(\bar{Y}_T)\I_\seq{\bar{\tau} >T}\right]=&
			\E\left[f(\bar{Y}_T)\I_\seq{\bar{Y}_T\geq L}\right]-
			\E\left[f(2L-\bar{Y}_T)\I_\seq{\bar{Y}_T<L}\right] = 2\E\left[(2\rho-1)f(\bar{X}_T)\I_\seq{\bar{X}_T\geq L}\right].\label{eq:refa}
		\end{align}
		%In the intermediate equality above, we interpret that the domain of definition of the test function $ f $ is extended by symmetry. That is, $ f(x)=f(2L-x) $
		%	for all $ x\leq L $.
	\end{lem}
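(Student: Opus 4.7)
The plan is to prove the two equalities separately. The first is a direct consequence of the explicit killed Brownian transition density displayed just above the lemma; the second is a straightforward unpacking of the definition of $\bar{X}_T$ in terms of the Bernoulli variable $\rho$, combined with the symmetry $W_T \stackrel{d}{=} -W_T$ of the Brownian motion.

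For the first equality, I would integrate $f$ against the given killed density (with variance parameter $\sigma(x)^2$): writing
\begin{align*}
\E\left[f(\bar{Y}_T)\I_{\seq{\bar{\tau}>T}}\right] = \int_L^\infty f(y)\,\frac{1}{\sqrt{2\pi\sigma(x)^2 T}}\Bigl(e^{-(y-x)^2/(2\sigma(x)^2 T)} - e^{-(y-(2L-x))^2/(2\sigma(x)^2 T)}\Bigr)\,dy.
\end{align*}
The first Gaussian term is exactly $\E[f(\bar{Y}_T)\I_{\seq{\bar{Y}_T\geq L}}]$. For the second term, the change of variable $y'=2L-y$ maps $[L,\infty)$ bijectively onto $(-\infty,L]$ and turns the exponent into $-(y'-x)^2/(2\sigma(x)^2 T)$, producing $\E[f(2L-\bar{Y}_T)\I_{\seq{\bar{Y}_T\leq L}}]$. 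Since $\bar{Y}_T$ has a density, the boundary $\{\bar{Y}_T = L\}$ has probability zero, so I can freely replace $\leq L$ by $<L$ to obtain the claimed first identity.

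For the second equality, I would condition on $\rho$: by definition of $\bar{X}_T$ and $\P(\rho=1)=\P(\rho=0)=1/2$,
\begin{align*}
2\E\left[(2\rho-1)f(\bar{X}_T)\I_{\seq{\bar{X}_T\geq L}}\right] &= \E\left[f(x+\sigma(x)W_T)\I_{\seq{x+\sigma(x)W_T\geq L}}\right] \\
&\quad - \E\left[f((2L-x)+\sigma(x)W_T)\I_{\seq{(2L-x)+\sigma(x)W_T\geq L}}\right].
\end{align*}
The first of these terms is $\E[f(\bar{Y}_T)\I_{\seq{\bar{Y}_T\geq L}}]$ by definition of $\bar{Y}_T$. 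For the second, the distributional symmetry $W_T\stackrel{d}{=}-W_T$ lets me replace $\sigma(x)W_T$ by $-\sigma(x)W_T$, so the random variable inside the indicator becomes $(2L-x)-\sigma(x)W_T = 2L-\bar{Y}_T$ and its argument becomes $f(2L-\bar{Y}_T)$; the condition $(2L-\bar{Y}_T)\geq L$ simplifies to $\bar{Y}_T\leq L$, which again equals $\bar{Y}_T<L$ almost surely. Combining these two terms yields the second identity.

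There is no real obstacle here: the lemma is essentially a repackaging of the classical reflection principle in a form tailored to the auxiliary Bernoulli-based process $\bar{X}_T$ that will be used throughout the paper. The only subtleties to flag are (i) the harmless replacement of $\leq L$ by $<L$ on the measure-zero boundary event, and (ii) keeping track of the constant volatility $\sigma(x)$ (frozen at the starting point) so that the Gaussian densities used here coincide with those in the quoted killed transition density formula.
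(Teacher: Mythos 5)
Your proof is correct and follows exactly the route the paper indicates but does not spell out: the first equality is read off from the quoted killed-Brownian transition density via the change of variables $y \mapsto 2L-y$, and the second is a direct unpacking of the Bernoulli mixture defining $\bar{X}_T$ together with the symmetry $W_T \stackrel{d}{=} -W_T$. Your remarks about the measure-zero boundary $\{\bar{Y}_T = L\}$ and the frozen volatility $\sigma(x)$ are the right points to flag.
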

	
	%As with the above flow notation, we may also use $ \bar{\tau}^{u,x}$ as flow notation.

	%Let $\bar{X}:= (\bar{X}_i)_{ i \in\bar{\mathbb{N}}} $ be the discrete time Markov chain starting at time $0$ from $\bar{X}_0 = x$ and evolving according to 
	%\begin{align}
	%	\label{eq:MCa}
	%	\bar{X}_{i+1}:=&\bar{X}_i+\sigma_i Z_{i+1}, \,\quad  i \in\bar{\mathbb{N}}_ {N_T},
	%\end{align} 
	%	Sometimes, we may also use the simplified notation $ \Delta_iW=W_{\zeta_i}-W_{\zeta_{i-1}} $.
	
	%We will also use the following stochastic representation for stopped Euler scheme which is 
	%\begin{align}
	%	\label{eq:1}
	%	\E[f(\bar{Y}_T)\I_\seq{
	%\bar{\tau}>T}]=&\mathbb{E}[f(\bar{Y}_T)\I_{\bar Y_T>L}(1-p_T(x,\bar Y_T))]=\mathbb{E}[f(\bar{Y}_T)\I_{\{\bar Y_T>L,
	%	U_1>p_T(x,	\bar{Y}_T)\}}]\\
	%p_T(x,y):=&\exp\left(-2\frac{(x-L)(y-L)}{aT}\right).\nonumber
	%\end{align}  
	%The above representation follows from the explicit density of the stopped Brownian motion which is given for $ a:=\sigma^2 $ as  
	%\begin{align*}
	%	\frac{\I_{ y>L}}{\sqrt{2\pi aT}}(1-p_T(x,y))\exp\left(-\frac{(y-x)^2}{2aT}\right).
	%\end{align*}
	%As we see from the above representation, we will also use a sequence of i.i.d. $ [0,1] $-Uniform r.v.'s $ \{U_i;i\in\mathbb{N}\} $.
	
	\subsection{The underlying Markov chain and associated simplified Malliavin Calculus}
	\label{sec:3a}
	
	This section serves as an introduction to the foundational Markov chain that will underpin our probabilistic representation and facilitate the development of our IBP formulas. Furthermore, we shall present the essential material required for our differential calculus computations. It is noteworthy that a substantial portion of this section has been drawn directly from \cite{FKL1} for reference.
	
	We consider a Poisson process with parameter $\lambda>0$, independent of the one-dimensional Brownian motion $W$ with jump times $T_i$ and we set $ \zeta_i:= T_i\wedge T$, for $ i\in\mathbb{N} $, with the convention that $\zeta_0= T_0 = 0$. 
	We let $ \pi $ be the partition of $ [0,T]$ given by $\pi:=\{0=:\zeta_0<\cdots<\zeta_{N_T} \leq T\} $. Associated with this set, we define the simplex $ A_{n}:=\{t\in (0,T]^n;0<t_1<\cdots<t_n\leq T\}$. For instance, on the set $ \{N_T=n\} $, $ n\in\mathbb{N} $, we have $ (\zeta_1,\dots,\zeta_n)\in A_n $ and $\zeta_{n+1} = T$.  In particular, for the set $ \{N_T=0\} $ (i.e. $ n=0 $), we let $ \pi:=\{0,T\} $ and $ A_0=\emptyset $. In this sense, we will use through the rest of the paper the index $ n\in\bar{\mathbb{N}} $ without any further mention of its range of values.
	
	%As it is the case in the previous observation, many proofs and definitions will be carried out conditioning on the set  $\seq{N_T = n}$, $ n\in\bar{\mathbb{N}} $. In that case, note that $ n $ and $ N_T $ are interchangeable. 
	%We also consider a Poisson process with parameter $\lambda>0$, independent of the one-dimensional Brownian motion $W$ with jump times $T_i$, $i\in \mathbb{N}$ and we set $ \zeta_i:= T_i\wedge T$, $ i\in\mathbb{N} $ with the convention that $\zeta_0= T_0 = 0$. 
	
	%Define $ \pi $ to be the partition of $ [0,T]$ given by $\pi:=\{0=:\zeta_0<\cdots<\zeta_{N_T} \leq T\} $. Associated with this set, we recall the definition of the simplex $ A_{n}:=\{t\in (0,T]^n;0<t_1<\cdots<t_n\leq T\}$. For instance, on the set $ \{N_T=n\} $, $ n\in\mathbb{N} $, we have $ (\zeta_1,\dots,\zeta_n)\in A_n $ and $\zeta_{n+1} = T$.  In particular, for the set $ \{N_T=0\} $ (i.e. $ n=0 $), we let $ \pi:=\{0,T\} $ and $ A_0=\emptyset $. In this sense, we will use through the rest of the paper the index $ n\in\bar{\mathbb{N}} $ without any further mention of its range of values.
	
	%As it is the case in the previous observation, many proofs and definitions will be carried out conditioning on the set  $\seq{N_T = n}$, $ n\in\bar{\mathbb{N}} $. In that case, note that $ n $ and $ N_T $ are interchangeable. 

	Let $\bar{X}:= (\bar{X}_i)_{ i \in\bar{\mathbb{N}}} $ be the discrete time Markov chain starting at time $0$ from $\bar{X}_0 = x$ and evolving according to 
	\begin{align}
		\label{eq:MCa}
		\bar{X}_{i+1}:=&\rho_{i+1}\bar{X}_i+(1-\rho_{i+1}) (2L-\bar{X}_i)+\sigma_i Z_{i+1}, \,\quad  i \in\bar{\mathbb{N}}_ {N_T},
	\end{align} 
	where for simplicity we set $ \sigma_i:=\sigma(\bar{X}_i) $, $ Z_{i+1}:= W_{\zeta_{i+1}} - W_{\zeta_i}=\sigma_i^{-1}\left(\bar{X}_{i+1}-\rho_{i+1}\bar{X}_i-(1-\rho_{i+1})(2L-\bar{X}_i)\right)$ and $\{\rho_i;i\in\mathbb{N}\} $ is an i.i.d. sequence of Bernoulli$(1/2)$ random variables such that $W, \, N $ and $\{\rho_i;i\in\mathbb{N}\} $ are mutually independent. In what follows, we use the notation $ h_i\equiv h(\bar{X}_i)$, $ i\in\bar{\mathbb{N}}_{n+1} $ for any function $ h:\mathbb{R}\rightarrow\mathbb{R} $. In particular, the reader may have noticed that we already used this notation in the above formula for $ h=\sigma $. We also associate to the Markov chain $\bar{X}$ the following sets
	\begin{equation}
		\label{set:Din:X}
		D_{i,n}:=\{\bar{X}_{i}\geq L, N_T=n\} , \quad \mbox{ for } i\in \bar{\N}_{n+1}.
	\end{equation}

	%We follow standard definitions and notations from \cite{Nuabook}. 
	Instead of using an infinite dimensional calculus as it is usually done in the literature, the approach developed below is based on a finite dimensional calculus for which the dimension is given by the number of jumps of the underlying Poisson process involved in the Markov chain $\bar{X}$.
	
	We now introduce the following space of smooth random variables.
	
	\begin{definition}
		\label{def:1}
		For $ i\in\bar{\mathbb{N}}_n$, we let $ {\mathbb{S}}_{i+1,n}(\bar{X}) $ be the subset of random variables $ H\in \mathbb{L}^0 $ such that there exists a measurable function $ h:\mathbb{R}^2\times \{0,1\}\times A_2\rightarrow \mathbb{R} $ satisfying
		\begin{enumerate}
			\item $\displaystyle{ H=
				h(\bar{X}_i,\bar{X}_{i+1},\rho_{i+1},\zeta_i,\zeta_{i+1}) 
			}
			$ on the set $\{N_T=n\} . $ 
			\item For any $ (s,t)\in A_2 $ and $ r\in\{0,1\} $, $ h(\cdot,\cdot,r,s,t)\in\mathscr{C}_p^\infty(\mathbb{R}^2) $.
		\end{enumerate}
	\end{definition} 
	
	For a r.v. $ H\in  {\mathbb{S}}_{i+1,n}(\bar{X})$, $ i\in\bar{\mathbb{N}}_n$, we may sometimes simply write 
	\begin{equation}
		\label{abuse:notation:space}
		H\equiv H(\bar{X}_i,\bar{X}_{i+1},\rho_{i+1}, \zeta_i,\zeta_{i+1}),
	\end{equation}
	that is the same symbol $ H $ may denote the r.v. or the function in the set $ {\mathbb{S}}_{i+1,n}(\bar{X}) $.  Also note that $ {\mathbb{S}}_{n+1,n}(\bar{X}) $ is well defined.
	
	One can easily define the flow derivatives for $ H\in\mathbb{S}_{i+1,n}(\bar{X}) $ as follows:
	\begin{align}
		\partial_{\bar{X}_{i+1}}H:=&\partial_2h
		(\bar{X}_i,\bar{X}_{i+1},\rho_{i+1},\zeta_i,\zeta_{i+1}), \nonumber\\
		\partial_{\bar{X}_{i}}H:=&\partial_1h(\bar{X}_i,\bar{X}_{i+1},\rho_{i+1},\zeta_i,\zeta_{i+1})+\partial_2h(\bar{X}_i,\bar{X}_{i+1},\rho_{i+1},\zeta_i,\zeta_{i+1})\partial_{\bar{X}_{i}}\bar{X}_{i+1}, \label{eq:flow} \\
		\partial_{\bar{X}_{i}}\bar{X}_{i+1}:=&
		(2\rho_{i+1}-1)+\sigma'_i Z_{i+1}.
		\nonumber
	\end{align}	
	\noindent We now define the derivative and integral operators for $ H\in {\mathbb{S}}_{i+1,n}(\bar{X}) $, $ i\in\bar{\mathbb {N}}_n $, as 
	\begin{align}
		\label{eq:I1}
		\mathcal{I}_{i+1}(H)
		:= &H \frac{ Z_{i+1}}{\sigma_i(\zeta_{i+1}-\zeta_i) }- {\mathcal{D}_{i+1} H}, \qquad \mathcal{D}_{i+1}H:=\partial_{\bar{X}_{i+1}}H.
	\end{align}

	Note that due to the above definitions and Assumption $ \mathbf{(H)} $, we also have that  $ \mathcal{I}_{i+1}(H),\mathcal{D}_{i+1}H \in{\mathbb{S}}_{i+1,n}(\bar{X})$ so that we can define iterations of the above operators, namely $ \mathcal{I}_{i+1}^{\ell+1}(H) = \mathcal{I}_{i+1}(\mathcal{I}^{\ell}_{i+1}(H))$ and similarly $\mathcal{D}^{\ell+1}_{i+1}(H) = \mathcal{D}_{i+1}(\mathcal{D}^{\ell}_{i+1}H)$, $ \ell\in\bar {\mathbb{N}} $, with the convention $\mathcal{I}^{0}_{i+1} (H) = \mathcal{D}^{0}_{i+1}(H) = H$.
	
	We define the filtration $\mathcal{G} := (\mathcal{G}_i)_{i\in \bar{\mathbb{N}}}$ where $\mathcal{G}_i := \sigma(Z^i,\zeta^i,\rho^i) $ with the notation $ a^i :=(a_1,\dots,a_i) $ for $ a=Z,\ \zeta,\ \rho $, $ i \in\mathbb{N}$. We let $ \mathcal{G} _0$ be defined as the trivial $ \sigma $-field. 
	We assume that the $ \sigma $-fields $\mathcal{G}_i$ are complete. Throughout this article, we will use the following notation for a certain type of conditional expectation that will appear frequently.
	For any $X\in \mathbb{L}^{1}$ and any $ i\in \bar{\mathbb{N}}_{n}$, 
	\begin{align*}
		\E_{i,n}[X]:=\E[X\,\vert\,\mathcal{G}_i,T^{n+1},\rho^{n+1}, N_T=n].
	\end{align*}
	Here again we have used the vector notation $ S^{n+1}=(S_1,....,S_{n+1}) $ for $ S=T$, $\rho $.
	Having the above definitions at hand, the following duality formula\footnote{This duality is obtained using the Gaussian density of $ \bar{X}_{i+1} $ while in classical Malliavin calculus it is based on the density of the Wiener process. Therefore the derivative and integral, $ \mathcal{D}_{i+1} $ and $ \mathcal{I}_{i+1} $, $ i\in\bar{\mathbb{N}}_n $ defined in the  formula \eqref{eq:IBP} are renormalizations of the usual duality principle in Malliavin calculus. In our case this notation simplifies greatly many equations.} is satisfied for any $ f\in\mathscr {C}^1_p(\rr)$ and any $ (i,\ell) \in\bar{\mathbb{N}}_n \times \mathbb{N}$:
	\begin{align}
		\label{eq:IBP}
		\E_{i,n}\left[{\mathcal{D}}^\ell_{i+1}f(\bar{X}_{i+1})H\right]=&\E_{i,n}\left[f(\bar{X}_{i+1}){\mathcal{I}}^\ell_{i+1}(H)\right].
	\end{align}
	In order to obtain explicit norm estimates for random variables in $ {\mathbb{S}}_{i+1,n}(\bar{X}) $, it is useful to define for $H\in \mathbb{S}_{i+1,n}(\bar{X})$, $ i\in\bar{\mathbb{N}}_n $ and $p\geq1$
	$$
	\|H\|_{p, i, n}^p:=\E_{i,n}\left[|H|^p\right].
	$$
	
	We will also use at several places the following \emph{extraction formula} for $ H_1,H_2 \in\mathbb{S}_{i+1,n}(\bar{X})$ :
	\begin{align}
		\label{eq:exta}
		\mathcal{I}^\ell_{i+1}(H_1H_2)=\sum_{j=0}^{\ell}(-1)^{j}\binom{\ell}{j}
		\mathcal{I}_{i+1}^{\ell-j}(H_1)\mathcal{D}^{j}_{i+1}H_2,
	\end{align}
	\noindent whose proof follows by induction. By iteration, one also obtains that $ {\mathcal{I}}^\ell_{i}(1)\in \mathbb{S}_{i,n}(\bar{X}) $ and it satisfies ${\mathcal{I}}^{\ell+1}_{i}(1)={\mathcal{I}}^\ell_{i}(1){\mathcal{I}}_{i}(1) -\ell {\mathcal{I}}^\ell_{i}(1)$ which, in particular, implies:
	\begin{align}
		\label{eq:link:integral:hermite:pol}
		&{{\mathcal{I}}}_{i}^\ell(1)=(-1)^\ell \mathcal{H}_\ell({a_{i-1}}(\zeta_i-\zeta_{i-1}),\sigma_{i-1} Z_i),
	\end{align}
	
	\noindent where $\mathcal{H}_\ell$ stands for the Hermite polynomial of order $\ell$, $\ell \in \mathbb{N}$, defined by $\mathcal{H}_\ell(t, x)= (g( t, x))^{-1} \partial^{\ell}_x g(t,x)$. Here, $g(t,x)$ denotes the density of the Gaussian law $\mathcal{N}(0,t)$.
	Using \eqref{eq:exta} for $ H_1=1 $ and $ H_2=H $ as well as \eqref{eq:I1}, the following norm bound for stochastic integrals is clearly satisfied for $ i\in \bar{\mathbb{N}}$ and any measurable set $ A\in\mathcal{F} $
	\begin{align}
		\label{eq:Hesta}
		\left\|\I_{A}{{\mathcal{I}}}_{i+1}^{\ell}(H)\right\|_{p, i, n}\leq C_{\ell,p}\sum_{j=0}^{\ell}(\zeta_{i+1}-\zeta_i)^{\frac{j-\ell}{2}}\|\I_{A}\mathcal{D}^j_{i+1}H\|_{p, i, n}.
	\end{align}
	
	We will frequently use the following H\"older inequality for smooth random variables $ H_1,H_2 \in\mathbb{S}_{i+1,n}(\bar{X})$: for any $ i\in \bar{\mathbb{N}} $, any $ p,p_1,p_2\geq 1 $ satisfying $ p^{-1}=p_1^{-1}+p_2^{-1} $ and any $ A\in\mathcal{F} $:
	\begin{align}
		\label{eq:Hest1}
		\|\I_{A}H_1H_2\|_{p, i, n}\leq& C\|\I_{A}H_1\|_{p_1, i, n}\|\I_{A}H_2\|_{p_2,i,n}.
	\end{align}

	Quantities such as $ \E_{i,n}[\delta_L(\bar{X}_{i+1})H] $ for $ H\in\mathbb{S}_{i+1,n}(\bar{X}) $ have a clear meaning due to the IBP formula \eqref{eq:IBP} (see also the theory introduced in Chapter V.9 \cite{IW} for a much more general framework) or in the sense of conditional laws
	
	\begin{align*}
		\E_{i,n}[\delta_L(\bar{X}_{i+1})H] =&\E_{i,n}[\I_{D_{i+1,n}}{\mathcal{I}}_{i+1}(H)] = \E_{i,n}[H\vert \bar{X}_{i+1}=L]g(a_i(\zeta_{i+1}-\zeta_i),L-\bar{X}_i),\nonumber
	\end{align*}
	\noindent where the set $D_{i, n}$ is defined by \eqref{set:Din:X}. We finally introduce the following space of random variables which satisfies certain time (ir)regularity estimates.
	\begin{definition}
		For $\ell\in\mathbb{Z}$, $ i\in \bar{\mathbb{N}}_n$, the space $ {\mathbb{M}}_{i+1,n}(\bar{X},\ell/2) $ is the set of random variables $ H\in\mathbb{L}^0 $ satisfying the property {\it (1)} in Definition \ref{def:1} and such that 
		\begin{align*}
			\I_{D_{i,n}}\|\I_{D_{i+1,n}}H\|_{p, i, n}\leq C(\zeta_{i+1}-\zeta_i)^{\ell/2},
		\end{align*}
		{for some deterministic constant $ C $ independent of $ (p,i,n) $.}
	\end{definition}

	Again we remark that the definition of the space $ {\mathbb{M}}_{i+1,n}(\bar{X},\ell/2)  $ uses the conditional norm $\E_{i, n}[.]$ and property {\it (1)} in Definition \ref{def:1}, so that this property is always stated on $\left\{N_T = n \right\}$, $n\in \N$.
	
	%A straightforward consequence of the definition of the space $ {\mathbb{M}}_{i+1,n}(\bar{X},\ell/2)  $ which is a direct consequence of \eqref{eq:exta} and \eqref{eq:Hesta} is the following property.
	
	A straightforward consequence of equation \eqref{eq:exta} and \eqref{eq:Hesta} is the following property.
	%
	%the definition of the space $ {\mathbb{M}}_{i+1,n}(\bar{X},\ell/2)  $ which is a direct consequence of 

	\begin{lem}
		%\label{lem:6}
		For $ j\in\{0,1\} $, $ i\in \bar{\mathbb{N}}, $ and $ k\in\mathbb{N} $, if $ H_1\in{\mathbb{M}}_{i+1,n}(\bar{X},j/2)\cap\mathbb{S}_{i+1,n}(\bar{X}) $ with $ \|\mathcal{D}_{i+1}^kH_1\|_{p, i, n}\leq C $ then $ {\mathcal{I}}_{i+1}^k(H_1)\in{\mathbb{M}}_{i+1,n} (\bar{X},(j-k)/2)$. Furthermore, if $ H_2\in  {\mathbb{M}}_{i+1,n}(\bar{X},k/2) $ then the product $ H_1H_2\in {\mathbb{M}}_{i+1,n}(\bar{X},(j+k)/2)  $.
	\end{lem}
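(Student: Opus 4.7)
The plan is to verify both assertions by a direct bookkeeping of the exponents of $(\zeta_{i+1}-\zeta_i)$, combining the norm estimate \eqref{eq:Hesta} for iterated stochastic integrals with the H\"older inequality \eqref{eq:Hest1}. Since membership in $\mathbb{M}_{i+1,n}(\bar X,\cdot)$ is a statement about the quantity $\I_{D_{i,n}}\|\I_{D_{i+1,n}}\,\cdot\,\|_{p,i,n}$, each step reduces to checking that the claimed power of $(\zeta_{i+1}-\zeta_i)$ is attained after the hypotheses have been applied.

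For the first claim, I apply \eqref{eq:Hesta} with $A=D_{i+1,n}$ and $\ell=k$ to obtain
\begin{equation*}
\I_{D_{i,n}}\|\I_{D_{i+1,n}}\mathcal{I}_{i+1}^{k}(H_1)\|_{p,i,n}\leq C_{k,p}\sum_{j'=0}^{k}(\zeta_{i+1}-\zeta_i)^{(j'-k)/2}\,\I_{D_{i,n}}\|\I_{D_{i+1,n}}\mathcal{D}_{i+1}^{j'}H_1\|_{p,i,n}.
\end{equation*}
The index $j'=0$ is controlled directly by $H_1\in\mathbb{M}_{i+1,n}(\bar X,j/2)$ and produces precisely $(\zeta_{i+1}-\zeta_i)^{(j-k)/2}$, the target scaling. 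The index $j'=k$ uses the hypothesis $\|\mathcal{D}_{i+1}^{k}H_1\|_{p,i,n}\leq C$ and yields a constant, which is dominated by $C\,T^{(k-j)/2}(\zeta_{i+1}-\zeta_i)^{(j-k)/2}$ since $j\in\{0,1\}\leq k$ and $\zeta_{i+1}-\zeta_i\leq T$. For intermediate indices $0<j'<k$ the same mechanism applies: the corresponding $L^{p}$-bounds on $\mathcal{D}_{i+1}^{j'}H_1$ follow from $H_1\in\mathbb{S}_{i+1,n}(\bar X)$ together with Assumption $\mathbf{(H)}$, and the excess factor $(\zeta_{i+1}-\zeta_i)^{(j'-j)/2}\leq T^{(j'-j)/2}$ is absorbed into the constant.

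For the product statement I apply the H\"older inequality \eqref{eq:Hest1} with $p_1=p_2=2p$, together with $\I_{D_{i,n}}^{2}=\I_{D_{i,n}}$, to obtain
\begin{equation*}
\I_{D_{i,n}}\|\I_{D_{i+1,n}}H_1H_2\|_{p,i,n}\leq C\bigl(\I_{D_{i,n}}\|\I_{D_{i+1,n}}H_1\|_{2p,i,n}\bigr)\bigl(\I_{D_{i,n}}\|\I_{D_{i+1,n}}H_2\|_{2p,i,n}\bigr).
\end{equation*}
The $\mathbb{M}$-space assumptions on $H_1$ and $H_2$ then bound the two factors by $C(\zeta_{i+1}-\zeta_i)^{j/2}$ and $C(\zeta_{i+1}-\zeta_i)^{k/2}$ respectively, whose product is $C(\zeta_{i+1}-\zeta_i)^{(j+k)/2}$, as required. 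The only mild subtlety in the argument is the intermediate derivative bookkeeping in the first part, but this is harmless because $\zeta_{i+1}-\zeta_i\leq T$ lets any superfluous positive power of the time increment be absorbed into the constant.
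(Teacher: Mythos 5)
Your argument is exactly what the paper intends: the lemma is presented there as a direct consequence of the norm bound \eqref{eq:Hesta} and the H\"older inequality \eqref{eq:Hest1}, which is precisely the exponent bookkeeping you carry out, including the key observation that superfluous positive powers of $\zeta_{i+1}-\zeta_i\le T$ may be absorbed into the constant. The point you flag about the intermediate derivatives $\mathcal{D}_{i+1}^{j'}H_1$, $0<j'<k$, is a genuine (if mild) looseness in the lemma's stated hypotheses when $k\ge 2$, since $\mathbb{S}_{i+1,n}(\bar X)$ only gives polynomial growth and not a deterministic bound in general; your reading — that $\mathbb{S}_{i+1,n}(\bar X)$ together with Assumption $\mathbf{(H)}$ supplies the missing bounds — is the intended one and holds for every $H_1$ (built from the bounded coefficients $a$ and $b$) to which the paper actually applies the lemma.
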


	\begin{lem}\label{chain:rule}
		Let $H\equiv H(\bar X_i, \bar X_{i+1}, \rho_{i+1}, \zeta_i,\zeta_{i+1}) \in \mathbb{S}_{i+1,n}(\bar X)$ with 
		$ i\in \bar{\mathbb{N}}, $ then the following chain rule type formula holds
		\begin{gather*}
			\partial_{\bar X_i}\mathcal{I}_{i+1}(H) = \mathcal{I}_{i+1}(\partial_{\bar X_i}H) - \frac{\sigma'_i}{\sigma_i}\mathcal{I}_{i+1}(H) .
		\end{gather*}
	\end{lem}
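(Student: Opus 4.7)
The plan is to verify the identity by direct expansion of both sides, using the definition \eqref{eq:I1} of $\mathcal{I}_{i+1}$ and the flow derivative formula \eqref{eq:flow}. Write $H$ as in \eqref{abuse:notation:space} and introduce the abbreviations $A := \sigma_i Z_{i+1} = \bar X_{i+1} - \rho_{i+1}\bar X_i - (1-\rho_{i+1})(2L-\bar X_i)$, $B := \partial_{\bar X_i}\bar X_{i+1} = (2\rho_{i+1}-1) + \sigma'_i Z_{i+1}$, and $\Delta t := \zeta_{i+1}-\zeta_i$. At the level of representing functions, the only nontrivial partials involved are $\partial_1 A = 1-2\rho_{i+1}$, $\partial_2 A = 1$, and $\partial_1 \sigma(x_1)^{-2} = -2\sigma'(x_1)\sigma(x_1)^{-3}$; these are the only sources of cross-terms between the $x_1$- and $x_2$-channels.

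First I compute $\partial_{\bar X_i}\mathcal{I}_{i+1}(H)$. The representing function of $\mathcal{I}_{i+1}(H)$ is $\tilde h = h\,A\,\sigma(x_1)^{-2}\Delta t^{-1} - \partial_2 h$, so Leibniz and \eqref{eq:flow} yield
\begin{align*}
\partial_{\bar X_i}\mathcal{I}_{i+1}(H) = \frac{(\partial_{\bar X_i}H)\,Z_{i+1}}{\sigma_i\,\Delta t} + \frac{hB+h(1-2\rho_{i+1})}{\sigma_i^2\,\Delta t} - \frac{2\sigma'_i\,h\,Z_{i+1}}{\sigma_i^2\,\Delta t} - \partial_{12}h - B\,\partial_{22}h,
\end{align*}
where I have used $\partial_1 h + B\partial_2 h = \partial_{\bar X_i}H$. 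Expanding $B = (2\rho_{i+1}-1)+\sigma'_i Z_{i+1}$ makes the middle two fractions collapse to $-\sigma'_i h Z_{i+1}/(\sigma_i^2\Delta t)$.

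Second I compute the right-hand side. The key observation is that the representing function of $\partial_{\bar X_i}H$ equals $\partial_1 h + (\partial_2 h)\cdot B(x_1,x_2)$, and $B(x_1,x_2)$ contains $x_2$ through the factor $\sigma'(x_1)\sigma(x_1)^{-1}A(x_1,x_2)$. Differentiating in $x_2$ therefore produces an extra term $(\sigma'_i/\sigma_i)\,\partial_{\bar X_{i+1}}H$ and gives
\begin{align*}
\mathcal{I}_{i+1}(\partial_{\bar X_i}H) = \frac{(\partial_{\bar X_i}H)\,Z_{i+1}}{\sigma_i\,\Delta t} - \partial_{12}h - B\,\partial_{22}h - \frac{\sigma'_i}{\sigma_i}\partial_{\bar X_{i+1}}H.
\end{align*}
Meanwhile $-(\sigma'_i/\sigma_i)\mathcal{I}_{i+1}(H) = -\sigma'_i hZ_{i+1}/(\sigma_i^2\Delta t) + (\sigma'_i/\sigma_i)\partial_{\bar X_{i+1}}H$, which exactly cancels the stray $(\sigma'_i/\sigma_i)\partial_{\bar X_{i+1}}H$ term above and reproduces the leftover $-\sigma'_i hZ_{i+1}/(\sigma_i^2\Delta t)$ piece from the first step, completing the verification. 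The only delicate point is bookkeeping of the dual dependence of $Z_{i+1}$ on $\bar X_i$---through the affine part $A$ and through the prefactor $\sigma_i^{-1}$---which is precisely the source of the correction $-(\sigma'_i/\sigma_i)\mathcal{I}_{i+1}(H)$ appearing on the right-hand side.
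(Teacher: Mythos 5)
Your proof is correct. Every step checks out: the expansion of $\partial_{\bar X_i}\mathcal{I}_{i+1}(H)$ via Leibniz on the representing function $\tilde h = hA\sigma(x_1)^{-2}\Delta t^{-1}-\partial_2 h$, the cancellation $(B+1-2\rho_{i+1})=\sigma'_i Z_{i+1}$ that collapses the middle fractions, and the identification of the extra $(\sigma'_i/\sigma_i)\partial_{\bar X_{i+1}}H$ term coming from $\partial_2 B=\sigma'_i/\sigma_i$ in the computation of $\mathcal{I}_{i+1}(\partial_{\bar X_i}H)$ are all accurate, and the two sides match.

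The route, however, is different from the paper's. The paper proves the lemma in a more modular way: it first invokes the extraction formula \eqref{eq:exta} to write $\mathcal{I}_{i+1}(H)=\mathcal{I}_{i+1}(1)H-\mathcal{D}_{i+1}H$, uses the base identity $\partial_{\bar X_i}\mathcal{I}_{i+1}(1)=-\frac{\sigma'_i}{\sigma_i}\mathcal{I}_{i+1}(1)$, and then isolates the only nontrivial step as the commutator $\partial_{\bar X_i}\mathcal{D}_{i+1}-\mathcal{D}_{i+1}\partial_{\bar X_i}=-\frac{\sigma'_i}{\sigma_i}\mathcal{D}_{i+1}$, evaluated at the level of representing functions via ${\mathcal{D}}_{i+1}\frac{\partial\bar X_{i+1}}{\partial\bar X_i}=\frac{\sigma'_i}{\sigma_i}$. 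Your version expands all representing functions from scratch, which is more elementary and self-contained (you do not need the extraction formula or the base case for $\mathcal{I}_{i+1}(1)$ as separate facts), but also more opaque: the cancellations appear as coincidences of coefficients rather than as the single conceptual reason, namely the failure of $\partial_{\bar X_i}$ and $\mathcal{D}_{i+1}$ to commute. The paper's decomposition makes the correction term $-\frac{\sigma'_i}{\sigma_i}\mathcal{I}_{i+1}(H)$ pop out as $-\frac{\sigma'_i}{\sigma_i}(\mathcal{I}_{i+1}(1)H-\mathcal{D}_{i+1}H)$ in one step, whereas you have to recombine two scattered pieces to see it; on the other hand your proof requires nothing but the definitions \eqref{eq:I1} and \eqref{eq:flow}, which is a reasonable trade-off.
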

	\begin{proof}
		From the extraction formula \eqref{eq:exta}, the usual chain rule and the fact that $\partial_{\bar X_i}\mathcal{I}_{i+1}(1) = -\frac{\sigma'_i}{\sigma_i}\mathcal{I}_{i+1}(1)$, we have that
		\begin{align*}
			\partial_{\bar X_i} \mathcal{I}_{i+1}(H)
			& = -\frac{\sigma'_i}{\sigma_i}\mathcal{I}_{i+1}(1) H + \mathcal{I}_{i+1}(1) \partial_{\bar X_{i}} H - \partial_{\bar X_{i}} {\mathcal{D}}_{i+1}H\\
			\mathcal{I}_{i+1}(\partial_{\bar X_i} H)
			& = \mathcal{I}_{i+1}(1) \partial_{\bar X_i} H - {\mathcal{D}}_{i+1}\partial_{\bar X_i} H.
		\end{align*}
		Note that $\partial_{\bar X_i}$ and ${\mathcal{D}}_{i+1}$ do not commute. Indeed, by the usual chain rule, one has
		\begin{align*}
			\partial_{\bar X_{i}} {\mathcal{D}}_{i+1}h -  {\mathcal{D}}_{i+1}\partial_{\bar X_i} h & = \partial_1\partial_2 h  + \partial^2_2 h \frac{\partial  \bar X_{i+1}}{\partial \bar X_i} - {\mathcal{D}}_{i+1}(\partial_1h + \partial_2 h\frac{\partial  \bar X_{i+1}}{\partial \bar X_i} )= -\partial_2 h  \frac{\sigma'_i}{\sigma_i}
		\end{align*}
		where in the last equality we used the fact that ${\mathcal{D}}_{i+1}\frac{\partial  \bar X_{i+1}}{\partial \bar X_i}  = \frac{\sigma'_i}{\sigma_i}$. By combining the above computations we obtain
		\begin{align*}
			\partial_{\bar X_i} \mathcal{I}_{i+1}(H) - \mathcal{I}_{i+1}(\partial_{\bar X_i} H) & = - \frac{\sigma'_i}{\sigma_i}\left(\mathcal{I}_{i+1}(1) - {\mathcal{D}}_{i+1} H\right) = -\frac{\sigma'_i}{\sigma_i}\mathcal{I}_{i+1}(H).
		\end{align*}
	\end{proof}
	%\begin{remark}
	%	Heuristically, the result of Lemma \ref{chain:rule} can be viewed as a chain rule formula of the type 
	%	\begin{align*}
	%		\partial_{\bar X_i} (\mathcal{I}_{i+1}(h)) \equiv(\mathcal{I}_{i+1})(\partial_{\bar X_i} h) + (\partial_{\bar X_i} \mathcal{I}_{i+1})(h).
	%	\end{align*}
	%\end{remark}

	\section{The probabilistic representation for the first hitting time}
	\label{sec:5}
	\subsection{Preliminaries}

	The probabilistic representation formula for the first hitting time $ \tau $ of $X$ is obtained in a similar way as in the case of the killed process treated in \cite{frikha:kohatsu:li}. We briefly explain the main arguments and deliberately omit some technical details.

	In order to introduce the associated semigroup, we let $\tau^{s,x}:=\inf\{t \geq s : X^{s,x}_t=L\}$ where $ X^{s,x} $ stands for the unique solution to
	\eqref{sde:dynamics} starting from $ x $ at time $ s $ and for any real-valued bounded and measurable function $ f $ defined on $\mathbb{R}_+\times \mathbb{R}$, we let
	$  P_t f(u,x)=\E\left[f(u+\tau^{u,x}\wedge t,X^{u,x}_{u+\tau^{u,x}\wedge t})\right] $. We will sometimes omit the dependence of $(\tau, X)$ with respect to $(u,x)$ when there is no confusion and simply write $P_t f(u,x) = \mathbb{E}[f(u + \tau\wedge t, X_{\tau \wedge t}) ]$. 
	
	Under Assumption $ \mathbf{(H)} $, for $ f $ a smooth function, we will assume for the sake of the argument that  $ Pf \in \mathscr{C}^{1,2}(\mathbb{R}_{+} \times (\mathbb{R}_+ \times [L,\infty)))$ with $\sup_{0\leq t \leq T} |\partial^{\ell}_x P_t f|_\infty \leq C$, for $\ell=1, 2$, for some positive constant $C:=C(T, f, a, b)$ and that it satisfies $ \partial_t P_t f={\mathcal{L}} P_t f$ on $ [L,\infty) $, $ t>0 $, where the differential operator $ {\mathcal{L}}$ is given by 
	$$
	\mathcal{L} f(u, x)= \partial_u f(u, x)  + b(x) \, \partial_x f(u,x) +\frac 12 a(x)\, \partial_x^2 f(u, x).
	$$  
	Removing the above hypothesis requires some technical arguments which can be found in Proposition 3.1 in \cite{FKL1}.

	% Under the additional assumption that the test function $ f $ vanishes at $L$, we obtain that $ P $ satisfies its corresponding Dirichlet boundary condition $ P_tf(u, L)=f(L)=0 $ together with $ P_0f(u, x)=f (u, x)$ for all $ x\geq L $.

	%		Conditioned on the outcome of the Poisson process at time $ T $, that is for fixed $N_T = N$, the Markov chain used to obtain the probabilistic representation is given by, for $i= 1,\dots, N+1$ 
	%		\textcolor{red}{ Forget using this process !JUst use Eulers cheme with change of measure  }
	%		\begin{align}
	%		\label{eq:scheme}
	%		\bar{X}_{{i}}
	%		&= U_{i}\bar{X}_{{i-1}}+(1-U_{{i}})(2L-\bar{X}_{{i-1}})+\sigma_{i-1}\Delta_{{i}} W,\\
	%		\bar X_0 &= x.\nonumber
	%		\end{align}
	
	The argument used to obtain the probabilistic representation formula starts by applying It\^o's formula to $(P_{T-t} f(u + t\wedge \bar{\tau}, \bar{Y}_{t\wedge\bar\tau}))_{t\in [0,T]}$, recalling that $\bar{Y}_t = x + \sigma(x) W_t$ and $ \bar{\tau} $ is its associated exit time from $[L, \infty)$. Hence, from It\^o's rule and Lemma \ref{lem:1}, it holds
	\begin{align*}
		& f(T \wedge \bar{\tau}, \bar{Y}_{T \wedge \bar{\tau}}) \\
		& \stackrel{\E}{=} P_{T}f(u, x) +\int_0^T \left(-\partial_r P_{r}f(u + s, \bar{Y}_s)\Big|_{r=T-s } + \partial_u P_{T-s} f(u+ s, \bar{Y}_s) +\frac{1}{2}a(x)\partial_x^2 P_{T-s} f(u+ s, \bar{Y}_s)\right)\I_\seq{\bar{\tau}>s} \, \d s\\
		& \stackrel{\E}{=} P_{T}f(u, x) +\int_0^T \left(\frac{1}{2}\left(a(x)-a(\bar{Y}_s)\right)\partial_x^2P_{T-s} f(u+ s, \bar{Y}_s)-b(\bar{Y}_s)\partial_xP_{T-s}f(u+ s, \bar{Y}_s)\right)\I_\seq{
			\bar{\tau}>s} \, \d s \\
		& \stackrel{\E}{=} P_{T}f(u, x) + 2(2\rho-1)\int_0^T \left(\frac{1}{2}\left(a(x)-a(\bar{X}_s)\right)\partial_x^2P_{T-s} f(u+s, \bar{X}_s)-b(\bar{X}_s)\partial_x P_{T-s}f(u+s, \bar{X}_s)\right)\I_\seq{\bar{X}_s\geq L} \, \d s.
	\end{align*}
	
	We now rewrite the previous representation using the Markov chain $(\bar{X}_i)_{0\leq i \leq N_T+1}$ defined by \eqref{eq:MCa} together with the Poisson process $N$. From the previous identity, we get
	\begin{align}
		P_T f(u, x) & \stackrel{\E}{=}  f(u+ T \wedge \bar{\tau}, \bar{Y}_{T\wedge \bar{\tau}}) \nonumber \\
		& + 2(2\rho-1)\int_0^T \left(\frac{1}{2}\left(a(\bar{X}_s) -a(x)\right)\partial_x^2P_{T-s} f(u+s, \bar{X}_s) +b(\bar{X}_s)\partial_x P_{T-s}f(u+s, \bar{X}_s)\right)\I_\seq{\bar{X}_s\geq L} \, \d s \label{integral:time:first:step} \\
		& \stackrel{\E}{=}  f(u+ T \wedge \bar{\tau}, \bar{Y}_{T\wedge \bar{\tau}}) e^{\lambda T}\I_\seq{N_T=0}  + e^{\lambda T} \lambda^{-1} 2 (2\rho_{N_T}-1) \nonumber \\
		& \quad \times \left\{\frac12(a(\bar{X}_1) - a(x)) \partial^2_x P_{T- \zeta_1}f(u + \zeta_1, \bar{X}_1) + b(\bar{X}_1) \partial_x P_{T-\zeta_1}f(u+\zeta_1, \bar{X}_1) \right\} \I_\seq{\bar{X}_1 \geq L} \I_\seq{N_T=1}. \nonumber
	\end{align}
	
	We now apply the IBP formula \eqref{eq:IBP} with respect to the random variables $ \bar{X}_1$ for the last term appearing in the right-hand side of the above expression. The IBP formula is applied once to the terms associated with the drift coefficient $ b $ and two times with respect to the terms related to the diffusion coefficient $ a$. In order to do that one first has to take the conditional expectation $\E_{0,1}[.]$. However, one must be cautious insofar as these IBPs involve the indicator function $ \I_\seq{\bar{X}_1\geq L} $. Hence,
	\begin{align*}
		P_{T}f(u, x)
		\stackrel{\E}{=}&  f(u+ T \wedge \bar{\tau}, \bar{Y}_{T\wedge \bar{\tau}}) e^{\lambda T} \I_\seq{N_T=0}
		+ e^{\lambda T} \lambda^{-1} 2(2\rho_{N_T}-1)
		\\
		\times&  \left(\frac{1}{2}{{\mathcal{I}}}_{1}\left(\left(a(\bar{X}_1) - a(x)\right)
		\I_\seq{ \bar{X}_1\geq L}\right)\partial_x P_{T-\zeta_1}f(\bar{X}_1) + {{\mathcal{I}}}_{1}\left(b(\bar{X}_{1})\I_\seq{ \bar{X}_1\geq L}\right) P_{T-\zeta_1}f(\bar{X}_{1})\right) \I_\seq{N_T= 1} \\
		\stackrel{\E}{=}&  f(u+ T \wedge \bar{\tau}, \bar{Y}_{T\wedge \bar{\tau}}) e^{\lambda T} \I_\seq{N_T=0}+ e^{\lambda T} 2\lambda^{-1}(2\rho_{N_T}-1)\\
		& \times \left(\frac{1}{2}{{\mathcal{I}}}_{1}\left(\left(a(\bar{X}_1) - a(x)\right)
		\right)\partial_x P_{T-\zeta_1}f(u+\zeta_1, \bar{X}_1) + {{\mathcal{I}}}_{1}\left(b(\bar{X}_{1} ) \right) P_{T-\zeta_1}f(u+\zeta_1, \bar{X}_{1})\right) \I_\seq{ \bar{X}_1\geq L} \I_\seq{N_T= 1},
	\end{align*}
	
	\noindent where we used the extraction formula \eqref{eq:exta} applied to the r.v. $ \I_\seq{\bar{X}_s\geq L} $ and Lemma 10.5 \cite{frikha:kohatsu:li} (taking first the conditional expectation w.r.t $\zeta_1$) for $ \ell=1,\ k=0 $ for the last equality. 
	
	We apply again the IBP formula \eqref{eq:IBP} in order to remove the first order derivative on $P_{T-\zeta_1}f$. We then use again the extraction formula  \eqref{eq:exta}. We obtain
	\begin{align}
		& P_{T}f(u, x) \stackrel{\E}{=} f(u+ T \wedge \bar{\tau}, \bar{Y}_{T\wedge \bar{\tau}}) e^{\lambda T} \I_\seq{N_T=0} + e^{\lambda T} 2\lambda^{-1}(2\rho_{N_T}-1)\nonumber\\
		& \times \left(\frac{1}{2}\mathcal{I}^2_{1} \left(a(\bar{X}_1) - a(x)\right) P_{T-\zeta_1}f(u+\zeta_1, \bar{X}_1) + {{\mathcal{I}}}_{1}\left(b(\bar{X}_{1} ) \right) P_{T-\zeta_1}f(u+\zeta_1, \bar{X}_{1})\right) \I_\seq{ \bar{X}_1\geq L} \I_\seq{N_T= 1} \label{prob:representation:step1}\\
		& - e^{\lambda T} 2\lambda^{-1}(2\rho_{N_T}-1) \frac12 \mathcal{I}^1_{1}\left(a(\bar{X}_1) - a(x)\right)P_{T-\zeta_1}f(u+\zeta_1, \bar{X}_{1}) \delta_L(\bar{X}_1)  \I_\seq{N_T= 1}.\nonumber
	\end{align}
	Using the extraction formula and then Lemma 10.5 in \cite{frikha:kohatsu:li}, we get
	\begin{align*}
		e^{\lambda T} &2\lambda^{-1} \mathbb{E}[(2\rho_{N_T}-1) \mathcal{I}^1_{1}\left(a(\bar{X}_1) - a(x)\right)P_{T-\zeta_1}f(u+\zeta_1, \bar{X}_{1}) \delta_L(\bar{X}_1)  \I_\seq{N_T= 1}] \\
		& = e^{\lambda T} \lambda^{-1} \mathbb{E}[(2\rho_{N_T}-1) (a(L)-a(x))\mathcal{I}^1_{1}\left(1\right)P_{T-\zeta_1}f(u+\zeta_1, \bar{X}_{1}) \delta_L(\bar{X}_1)  \I_\seq{N_T= 1}] \\
		& = \int_0^T \mathbb{E}[(2\rho-1) (a(L)-a(x)) \frac{(L-x)}{a(x) s} \delta_L(\bar{Y}_s) P_{T-s}f(u+s, L)] \, \d s \\
		& =- \frac{a(L)-a(x)}{a(x)} \mathbb{E}[f(u+\bar\tau,L)\I_\seq{\bar\tau\leq T}].
	\end{align*}
	Plugging the above identity into \eqref{prob:representation:step1}, we get
	\begin{equation}\label{prob:representation:before:iteration}
		\begin{aligned}
			P_{T}f(u, x) & \stackrel{\E}{=} f(u+ T \wedge \bar{\tau}, \bar{Y}_{T\wedge \bar{\tau}}) e^{\lambda T} \I_\seq{N_T=0} + e^{\lambda T} \bar{\theta}^1 \I_\seq{ \bar{X}_1\geq L} P_{T-\zeta_1}f(u+\zeta_1, \bar{X}_1) \I_\seq{N_T= 1} \\
			& \quad + \frac{a(L) - a(x)}{a(x)} f(u+\bar{\tau}, L) \I_\seq{\bar\tau\leq T},
		\end{aligned}
	\end{equation}
	\noindent where for $i=1, \cdots, n$ 
	\begin{equation}\label{definition:thetai}
		\bar{\theta}^i := 2\lambda^{-1}(2\rho_{i}-1) \left(\mathcal{I}^2_{i}(c_2^i) + {\mathcal{I}}_{i}(c_1^i) \right), 
	\end{equation}
	with
	\begin{equation}
		\label{definition:coefficients:c1:c2}
		\begin{aligned}
			c_1^i := b_i, \quad c_2^i  := \frac12 (a_i - a_{i-1}),
		\end{aligned}
	\end{equation}
	\noindent recalling that $a_i = a(\bar{X}_i)$ and $b_i = b(\bar{X}_i)$. Let us emphasize that, under assumption \textbf{(H)}, the following time estimates hold: for all $p\geq1$, there exists $C:=C(a, b, T, p)$ such that
	\begin{align}
		\label{eq:est2}
		\|\I_\seq{\bar{X}_1\geq L}\mathcal{I}_{1}(a(\bar{X}_1)-a(x))\|_{p, 0, 1} & \leq C,\\
		\|\I_\seq{\bar{X}_1\geq L}\mathcal{I}^2_{1}(a(\bar{X}_1)-a(x))\|_{p, 0, 1} +\|\I_\seq{\bar{X}_1\geq L}\mathcal{I}_{1}(b(\bar{X}_{1}))\|_{p, 0, 1}& \leq C \zeta_1^{-1/2}, \nonumber
	\end{align}
	
	\noindent which in turn, by using the fact that on $\P(N_T=1, \zeta_1 \in dt) = \lambda e^{-\lambda T} dt$ on $[0,T]$, lead to the integrability of the second term appearing in the right-hand side of \eqref{prob:representation:before:iteration}.

	The main idea to obtain the following result consists in iterating the identity \eqref{prob:representation:before:iteration} by following similar lines of reasoning as those employed in \cite{frikha:kohatsu:li,chen:frikha:li,FKL1}. Omitting the remaining technical details, we obtain the following result.   
	%		Taking these arguments into account, we also see that in the next iteration a term like the first  term above will appear which leads to the following probabilistic representation:
	
	\begin{theorem}
		\label{th:st1}
		Let $T>0$ and suppose that Assumption \textbf{(H)} is satisfied. For any $n\in\mathbb{N}$, define on the set $ \{N_T=n\} $ the following random variables
		\begin{equation*}
			\Gamma_{n}= \left\{
			\begin{array}{ll}
				\prod_{{i}=1}^{n}  {\theta}^i
				& \mbox{ if } n \geq1, \\
				1 & \mbox{ if } n = 0,
			\end{array}
			\right.
		\end{equation*}

		\noindent and
		%\textcolor{red}{
		%		\begin{equation*}
		%		\bar{\Gamma}_{n}= \left\{
		%		\begin{array}{ll}
		%		\frac{a(L) - a_{{n}}}{a_{{n}}} \prod_{{i}=1}^{n-1} {\theta}^i
		%		& \mbox{ if } n \geq1, \\
		%		\frac{a(L) - a_{{n}}}{a_{{n}}} & \mbox{ if } n = 0,
		%		\end{array}
		%		\right.
		%		\end{equation*}
		%}
		
		\begin{equation*}
			\bar{\Gamma}_{n}= 
			\frac{a(L) - a_{{n}}}{a_{{n}}} \Gamma_n,
		\end{equation*}
		
		\noindent where,
		\begin{align}
			\theta^i:=&\I_\seq{\bar{X}_i>L}\bar \theta^i
			=2(2\rho_i-1)\lambda^{-1}\left({{\mathcal{I}}}_{i}(c^i_1)+{{\mathcal{I}}}^2_{i}(c^i_2)\right)\I_\seq{\bar{X}_i>L}\notag\\
			=& \I_\seq{{\bar{X}_{i}>L}}
			2(2\rho_{i}-1)\lambda^{-1}\left\{
			c^{i}_2{\mathcal{I}}_{i}^2(1)+(c^{i}_1-2\mathcal{D}_ic^{i}_2){\mathcal{I}}_{i}(1)+\mathcal{D}^2_ic_2^{i}+\mathcal{D}_ic_1^{i}
			\right\}.
			\label{eq:ab}
		\end{align}

		Then, for any bounded and measurable function $f$  defined on $\rr_+ \times \rr$, the following probabilistic representation holds
		\begin{align}
			\E[f(T\wedge \tau, X_{T \wedge \tau})]  &= e^{\lambda T} \mathbb{E}\left[  f((\zeta_{N_T}+ {\bar{\tau}^{N_T}}) \wedge T, {\bar{Y}}_{(\zeta_{N_T}+ {\bar{\tau}^{N_T}}) \wedge T})   \Gamma_{N_T}\right] \nonumber\\
			& \quad + e^{\lambda T} {\mathbb{E}\left[f(\zeta_{N_T}+\bar{\tau}^{N_T}, L) \I_\seq{ { \zeta_{N_T}+\bar{\tau}^{N_T}} \leq T} \bar{\Gamma}_{N_T}\right]}.
			\label{eq:PRF}
		\end{align}
	\end{theorem}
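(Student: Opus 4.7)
The plan is to iterate the one-step probabilistic representation \eqref{prob:representation:before:iteration}. Apply the same identity to the remaining semigroup $P_{T-\zeta_1}f(u+\zeta_1,\bar X_1)$ appearing in the right-hand side, now taking $(u+\zeta_1,\bar X_1)$ as the new starting spacetime point, $T-\zeta_1$ as the horizon, and using the restriction of the Poisson process $N$ to $[\zeta_1,T]$ (which, conditionally on $\mathcal{G}_1$, is an independent Poisson process of parameter $\lambda$). This substitution produces three new contributions: a ``no further jump'' leaf which, combined with the ambient $\I_\seq{N_T=1}$ and $\bar\theta^1\I_\seq{\bar X_1\geq L}=\theta^1$, yields the $k=1$ summand of the first expectation in \eqref{eq:PRF}; a ``one further jump'' iteration term involving $P_{T-\zeta_2}f(u+\zeta_2,\bar X_2)$ carrying the accumulated weight $e^{\lambda T}\theta^1\theta^2$ on $\{N_T=2\}$; and a boundary contribution, which, by invoking Lemma 10.5 of \cite{frikha:kohatsu:li} conditionally on $\mathcal{G}_1$ exactly as was done at the initial step in the excerpt, takes the form $e^{\lambda T}\bar\Gamma_1 f(\zeta_1+\bar\tau^1,L)\I_\seq{\zeta_1+\bar\tau^1\leq T}\I_\seq{N_T=1}$.

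A straightforward induction on $n\in\mathbb{N}$ then yields, after $n$ iteration steps, the partial representation
\[ P_T f(u,x)\stackrel{\E}{=}e^{\lambda T}\sum_{k=0}^{n-1}\mathbb{E}\bigl[f((\zeta_k+\bar\tau^k)\wedge T,\bar Y_{(\zeta_k+\bar\tau^k)\wedge T})\Gamma_k\I_\seq{N_T=k}\bigr]+e^{\lambda T}\sum_{k=0}^{n-1}\mathbb{E}\bigl[f(\zeta_k+\bar\tau^k,L)\I_\seq{\zeta_k+\bar\tau^k\leq T}\bar\Gamma_k\I_\seq{N_T=k}\bigr]+R_n, \]
where the remainder is
\[ R_n:=e^{\lambda T}\mathbb{E}\bigl[\Gamma_n\I_\seq{\bar X_n\geq L}P_{T-\zeta_n}f(u+\zeta_n,\bar X_n)\I_\seq{N_T=n}\bigr]. \]
Passing to $n\to\infty$, the two partial sums consolidate (by countable additivity over the events $\{N_T=k\}$) into the expectations with index $N_T$ that appear in \eqref{eq:PRF}, so the theorem will follow once $R_n\to 0$ is established.

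The main technical obstacle is this convergence $R_n\to 0$. Since $|P_{T-\zeta_n}f|\leq\|f\|_\infty$, the bound $|R_n|\leq e^{\lambda T}\|f\|_\infty\mathbb{E}[|\Gamma_n|\I_\seq{N_T=n}]$ reduces the question to controlling a moment of the multiplicative weight $\Gamma_n=\prod_{i=1}^n\theta^i$. Extending the one-step time estimates \eqref{eq:est2} to a generic level via the extraction formula \eqref{eq:exta} and the conditional H\"older inequality \eqref{eq:Hest1} yields conditional bounds of the form $\|\theta^i\|_{p,i-1,n}\leq C(1+(\zeta_i-\zeta_{i-1})^{-1/2})$ with $C=C(a,b,\lambda,p)$ independent of $i$ and $n$. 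Since, conditional on $\{N_T=n\}$, the jump times $(\zeta_1,\dots,\zeta_n)$ are distributed as the order statistics of $n$ i.i.d.\ uniform random variables on $[0,T]$, integrating the product of these bounds against the resulting Dirichlet-type density and combining with $\P(N_T=n)=e^{-\lambda T}(\lambda T)^n/n!$ produces a bound of the form $C^n/n!$ that forces $R_n\to 0$. The remaining details of this moment estimate follow essentially verbatim the analogous convergence arguments in \cite{frikha:kohatsu:li,FKL1}, so we omit them.
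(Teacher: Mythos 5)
Your proposal matches the paper's own (sketched) argument: derive the one-step representation \eqref{prob:representation:before:iteration}, iterate it conditionally on the Poisson skeleton, and show that the remainder vanishes using the one-step time estimates of the type \eqref{eq:est2}. The only cosmetic imprecision is that the Dirichlet-type integral of $\prod_i(\zeta_i-\zeta_{i-1})^{-1/2}$ over $A_n$ makes the resulting bound on $|R_n|$ of order $(\mathrm{const})^n/\Gamma(\tfrac{n}{2}+1)$ rather than literally $C^n/n!$, but this still tends to zero, so the conclusion is unaffected.
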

	Importantly note that we have used the extraction formula in order to obtain \eqref{eq:ab}.  For this, we refer the reader to \eqref{eq:exta} and to the proof of Lemma 2.5 \cite{frikha:kohatsu:li} to see how it is applied.
	In order to avoid long expressions in the above identity, we have used the shortened notation $\bar{\tau}^{N_T}$ for ${\bar{\tau}^{\zeta_{N_T},\bar{X}_{N_T}}}$ where we recall that $ \bar{\tau}^{s,x} $ stands for the first hitting time of the level $L$ associated to the approximation process $ \bar{Y} $ starting from $x$ at time $s$. That is,
	\begin{align*}
		\bar{\tau}^{s,x}:=\inf\{u \geq s : \bar{Y}^{s,x}_u=L\}.
	\end{align*}

	In particular, $\bar{\tau}^{N_T}$ is sampled independently of $N$ and $W$. Similarly, we used the shorten notation $\bar{Y}_{(\zeta_{N_T}+ {\bar{\tau}^{N_T}})\wedge T}$ for $\bar{Y}^{\zeta_{N_T}, \bar{X}_{N_T}}_{(\zeta_{N_T}+{\bar{\tau}^{N_T}})\wedge T}$ where $\bar{Y}_t^{s, x}=x + \sigma(x) (W_t-W_s)$. We also recall that $ N $ is a $ \lambda $-Poisson process with jump times $ T_n$, $n\in\mathbb{N}$,
	so that $\Delta T_i =  T_i-T_{i-1} $ is a sequence of i.i.d. exponential random variables with parameter $ \lambda$ and that on the set $ \{N_T=n\} $, we have $ \pi=\{0=\zeta_0<...<\zeta_n<\zeta_{n+1}=T\} $ recalling that $\zeta_i = T_i \wedge T$.
	
	The second term on the right-hand side of \eqref{eq:PRF} containing $ \bar{\Gamma} $ stems from the boundary terms appearing when one performs the integration by parts formulas in the expansion. In particular, this term does not appear in the case of the killed diffusion investigated in \cite{frikha:kohatsu:li}.

	%\textcolor{red}{ I think is better to use renormalized r.v. which may lead to an easier anticipating calculus. That is, by space-time relations one has $ \bar{\tau}^{s,x}= \frac{(L-x)^2}{a(x)}\tau^{s,1}$ where $ \tau^{s,-1} $ is the exit time from the interval $ [-1,\infty) $ corresponding to a standard Brownian motion starting at $ 0 $ at time $ s>0 $. }
	
	%Therefore $ \bar{\tau}^{N_T} $ denotes the exit time for the process $ \bar{Y} $ started from the point $ \bar{X}^\pi_{{N_T}} $
	%independently of all previous noise.
	
	Note that both terms appearing on the right-hand side of \eqref{eq:PRF} have the same form if we assume that $ \zeta_{N_T}+\bar{\tau}^{N_T}<T $ {in which case ${\bar{Y}}_{(\zeta_{N_T}+ {\bar{\tau}^{N_T}}) \wedge T} = L$}. Also note that $ \mathbb{P}\left(\zeta_{N_T}+\bar{\tau}^{N_T}=T\right)=0 $. 
	
	%Given that our first step in the next section is to condition on the space random variables $ \bar{X}_i $, $ i\in\mathbb{N} $, we see that the form of both terms are similar as functions of $  \zeta_{N_T}+\bar{\tau}^{N_T}$. 
	
	%	\textcolor{red}{ I am not sure that later on the derivatives of $ \frac{T-\zeta_{N_T}}
	%		{N_T+1} $  are taken into account. Maybe there  is a mistake}
	%	
	%	\textcolor{blue}{What is $N_T - 1$ when $N_T = 0$. Also, I don't understand $T-\zeta_{N_T}$ here. I am checking but not so easy to see.}.

	\section{IBP formulas with respect to the exit time}
	\subsection{IBP with respect to generalized inverse Gaussian times}
	
	In this section, we shall introduce the general transfer formula employing IBP with respect to the interarrival times of the Poisson process. A transfer formula, as explained in reference \cite{FKL1}, is a formula that transfers the derivative from outside the expectation to inside, making it subsequently amenable to iterative procedures.
	
	The principal difference lies in our current consideration of the conditional distribution of the random variables with respect to the spatial variables. Furthermore, given our perpetual focus on compact time intervals, it follows that test functions are bounded.

	As our objective is to elucidate the technique, as opposed to pursuing utmost generality and for the sake of streamlining expressions, we restrict our consideration to $ f(t, x) \equiv f(t) $, subject to the simplifying condition that $ f(T) = 0 $, where $ T >0 $ is an arbitrary time horizon. In other words, the test function $ f $ solely relies on the time variable and vanishes at the time interval's boundary. As is customary, in more general circumstances, one may consider $ \bar{f}(t) = f(t) - f(T) $ instead of $ f $.

	The overarching structure of the IBP formula, when dealing with the stopped process $ X_{T\wedge \tau} $, was derived through an initial conditioning on the Poisson process. Then, one performs IBP operations with respect to the spatial variables and eventually integrates with respect to the time variables. In this scenario, it becomes imperative to consider conditional expectations with respect to the spatial variables. Consequently, the entities that replace the time differences in this context are the spatial-related quantities:
	\begin{align*}
		(\Delta_{i}\bar{X})^2:=&(\bar{X}_{i} -\bar{X}_{{i-1}}\rho_{i}-(1-\rho_i)(2L-\bar{X}_{{i-1}}))^2,\ i\leq N_T,\\
		(\Delta_{N_T+1}\bar{X})^2:=&(L-\bar{X}_{{N_T}})^2.
	\end{align*} 
	
	The rationale behind introducing the aforementioned quantity is rooted in the fact that when the derivative operator is applied to  each term in \eqref{eq:PRF}, it will diminish the time order by a factor of $ 1 $. Consequently, to maintain stability and balance in the analysis, we introduce a stabilization mechanism that relies on spatial variables possessing an equivalent order given by $(\Delta_{i}\bar{X})^2$. To achieve this equilibrium, we employ the square of the distance between points.
	
	The subsequent result, which serves as the starting point of our analysis, stems directly from Theorem \ref{th:st1}.
	\begin{corol}Under the same conditions as in Theorem \ref{th:st1} and assuming that $ f $ does not depend on the space component, i.e. $ f(s,y)\equiv f(s) $, $ y\in\mathbb{R} $ with $ f(T)=0 $, we have 
		\begin{align}
			\E[f(\tau\wedge T)]  &= e^{\lambda T} \mathbb{E}\left[f(\zeta_{N_T}+\bar{\tau}^{\zeta_{N_T}}) \I_\seq{ {\bar{\tau}^{N_T}} \leq T- \zeta_{N_T}}
			\widehat{\theta}^{N_T}	\prod_{i=1}^{N_T}\theta^i \right],
			\label{eq:PRF1}\\
			\nonumber \mbox{ with } \quad
			\widehat{\theta} ^{N_T}& :=1+ \frac{a(L)-a_{N_T}}{a_{N_T}}.			\end{align}
	\end{corol}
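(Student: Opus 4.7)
The plan is to obtain this corollary as a direct specialization of Theorem \ref{th:st1}. Starting from the probabilistic representation \eqref{eq:PRF}, I would substitute the test function $f(s,y) \equiv f(s)$ and use the boundary condition $f(T) = 0$ to rewrite both terms on the right-hand side in a common form, then combine them by exploiting the relation $\bar{\Gamma}_{N_T} = \tfrac{a(L)-a_{N_T}}{a_{N_T}} \Gamma_{N_T}$.

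More concretely, first I would note that since $f$ depends only on time, the first term in \eqref{eq:PRF} becomes
$$
e^{\lambda T} \mathbb{E}\bigl[f\bigl((\zeta_{N_T}+\bar{\tau}^{N_T})\wedge T\bigr)\,\Gamma_{N_T}\bigr].
$$
On the event $\{\bar{\tau}^{N_T} > T - \zeta_{N_T}\}$ we have $(\zeta_{N_T}+\bar{\tau}^{N_T})\wedge T = T$, so $f((\zeta_{N_T}+\bar{\tau}^{N_T})\wedge T) = 0$ by hypothesis; on the complementary event $\{\bar{\tau}^{N_T} \leq T - \zeta_{N_T}\}$, the min is realized by $\zeta_{N_T}+\bar{\tau}^{N_T}$. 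Hence the first term equals
$$
e^{\lambda T} \mathbb{E}\bigl[f(\zeta_{N_T}+\bar{\tau}^{N_T})\,\I_{\{\bar{\tau}^{N_T}\leq T - \zeta_{N_T}\}}\,\Gamma_{N_T}\bigr].
$$
The second term in \eqref{eq:PRF} already carries the indicator $\I_{\{\zeta_{N_T}+\bar{\tau}^{N_T}\leq T\}} = \I_{\{\bar{\tau}^{N_T}\leq T - \zeta_{N_T}\}}$, and with $y$-dependence suppressed equals
$$
e^{\lambda T}\mathbb{E}\bigl[f(\zeta_{N_T}+\bar{\tau}^{N_T})\,\I_{\{\bar{\tau}^{N_T}\leq T - \zeta_{N_T}\}}\,\bar{\Gamma}_{N_T}\bigr].
$$

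Adding the two expressions and using $\bar{\Gamma}_{N_T} = \tfrac{a(L)-a_{N_T}}{a_{N_T}}\Gamma_{N_T}$, the two contributions combine into a single expectation with the multiplicative factor $1 + \tfrac{a(L)-a_{N_T}}{a_{N_T}} = \widehat{\theta}^{N_T}$ times $\Gamma_{N_T} = \prod_{i=1}^{N_T}\theta^i$, yielding \eqref{eq:PRF1}. No genuine obstacle arises since the identity is a purely algebraic reorganization of Theorem \ref{th:st1}; the only mildly delicate point is the bookkeeping for the boundary case $\zeta_{N_T}+\bar{\tau}^{N_T} = T$, which is handled by the remark immediately preceding the corollary that $\mathbb{P}(\zeta_{N_T}+\bar{\tau}^{N_T} = T) = 0$, so the indicator can be taken with weak or strict inequality interchangeably.
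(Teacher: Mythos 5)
Your proof is correct and is exactly the intended derivation: the paper merely states that the corollary ``stems directly from Theorem \ref{th:st1},'' and your argument supplies the straightforward algebraic details of that specialization. Splitting the first term using $f(T)=0$ to reduce it to the event $\{\bar{\tau}^{N_T}\leq T-\zeta_{N_T}\}$, and then combining $\Gamma_{N_T}+\bar{\Gamma}_{N_T}=\widehat{\theta}^{N_T}\Gamma_{N_T}$, is precisely the computation the authors have in mind (with the null-set remark handling the boundary case $\zeta_{N_T}+\bar{\tau}^{N_T}=T$).
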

	\subsection{Conditioning on the space variables}
	
	In this section, our objective is to state the conditional distribution of the jump times of the Poisson process, with respect to the spatial variables. In pursuit of this objective, we introduce the notations  $ \bar{X} :=\{\bar{X}_i;\ i\leq N_T\}$ and $ \rho:= \{\rho_i;\ i\leq N_T\}$. It is worth noting that, with probability one, $ \Delta_i\bar{X}\neq 0 $ for all $ i\leq N_T+1$. 
	 We will rely on this fact without reiterating it in the subsequent discussions. Additionally, it is pertinent to observe that the ``weight" random variable $\bar{\Gamma}_{N_T}\in \sigma(\bar{X}_i, T_i; i\leq N_T)$.

	In order to give proper definitions of how our differential operators will be defined, we first need to reconstruct the sequences of random variables $ \bar{X}_i $ and $ T_i$, $ i\in\mathbb{N} $, as follows.
	\begin{lem}
		\label{lem:8a}
		Let $ \tilde{Z}_{i} $ be a sequence of identically and independently distributed symmetric exponential random variables with parameter\footnote{That is $ \tilde{Z}_{i}=(-1)^{\eta_{i}} F_{i}$, where $ F_{i} \sim \mathrm{Exp}(1/\sqrt{2\lambda})$ and $ \eta_{i} \sim\mathrm{Ber}(1/2)$.} $1/\sqrt{2\lambda}$. 
		Inductively, define the following sequence of random variables:
		\begin{align*}
			\mathtt{Y}_{i}=\rho_{i} \mathtt{Y}_{i-1}+(1-\rho_{i})(2L- \mathtt{Y}_{i-1})+\sigma( \mathtt{Y}_{i-1})\tilde{Z}_{i}, \quad { \mathtt{Y}_0 = \bar{X}_0}
		\end{align*}
		Here $ \{\rho_i;i\in\mathbb{N}\} $ is assumed to be independent of $ \{\tilde{Z}_i;i\in\mathbb{N}\} $. Given $ \{ \mathtt{Y}_i;i\in\mathbb{N}\} $, define $ \tau_{i} $, $ i\in\mathbb{N} $, to be an independently distributed sequence of 
		random variables {such that $\tau_i$ follows a} %inverse Gaussian distribution with mean parameter $ \mu:=\frac{|\Delta_{i} \mathtt{Y}|}{\sqrt{2\lambda }\sigma( \mathtt{Y}_{i-1})} $ and shape parameter $ \lambda $, denoted by $ \mathrm{IG}(\mu,\lambda) $.
		%\textcolor{red}
		{Generalized Inverse Gaussian distribution\footnote{We denote by $GIG(a,b,p)$ the probabilistic distribution with density $f(x) = \frac{(a/b)^{p/2}}{2 K_p(\sqrt{ab})} x^{(p-1)} e^{-(ax + b/x)/2}$, $x>0$,
				where $a,b>0$ and $p\in\mathbb{R}$. Here $ K $ denotes the modified Bessel functions of the second kind . For more information, see the Appendix.} with parameters $a = 2\lambda$, $b= 2\lambda \mu^2_i$, $p = \frac{1}{2}$, denoted by $GIG(2\lambda,2\lambda {\mu_i^2}, \frac{1}{2})$, where with some abuse of notation, we let  $\mu _i= \frac{|\Delta \mathtt{Y}_i|}{{ \sigma_{i-1}  }\sqrt{2\lambda}}$} where $ \sigma_{i-1}=\sigma(\mathtt{Y}_{i-1}) $. Then, the following equality in law is satisfied $ \{(\rho_i,\bar{X}_i,\Delta T_i):i\in\mathbb{N} \}\stackrel{\mathcal{L}}{=}\{(\rho_i, \mathtt{Y}_i,\tau_i):i\in\mathbb{N}\}$ where $\Delta T_i = T_{i} - T_{i-1}$.
	\end{lem}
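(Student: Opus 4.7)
\medskip

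\noindent\textbf{Proof plan.} The plan is to argue by induction on $i$, reducing the equality in law of the infinite sequences to a one step matching of conditional joint densities. Since $\{\rho_i\}$ is an i.i.d.\ Bernoulli$(1/2)$ sequence that is independent of all other randomness on \emph{both} sides, it suffices to show: given $\mathcal{F}_i=\sigma(\rho^{i+1},\bar X^i,\Delta T^i)$ on the original side (resp.\ $\sigma(\rho^{i+1},\mathtt Y^i,\tau^i)$ on the reconstructed side), the conditional joint law of $(\bar X_{i+1},\Delta T_{i+1})$ coincides with that of $(\mathtt Y_{i+1},\tau_{i+1})$. Under the inductive hypothesis this needs to be verified only pointwise at each fixed past, so the problem reduces to a single identity between densities on $\mathbb{R}\times\mathbb{R}_+$.

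On the original side, writing $m := \rho_{i+1}\bar X_i+(1-\rho_{i+1})(2L-\bar X_i)$ and $\sigma_i=\sigma(\bar X_i)$, the first sampling $\Delta T_{i+1}\sim\mathrm{Exp}(\lambda)$ followed by $\bar X_{i+1}=m+\sigma_i Z_{i+1}$ with $Z_{i+1}\,|\,\Delta T_{i+1}\sim\mathcal N(0,\Delta T_{i+1})$ yields the joint density
\begin{equation*}
p(y,t) \;=\; \lambda\,e^{-\lambda t}\,\frac{1}{\sigma_i\sqrt{2\pi t}}\exp\!\Big(-\frac{(y-m)^2}{2\sigma_i^2\, t}\Big),\qquad (y,t)\in\mathbb{R}\times\mathbb{R}_+.
\end{equation*}
On the reconstructed side, $\mathtt Y_{i+1}=m+\sigma_i\tilde Z_{i+1}$ with $\tilde Z_{i+1}$ symmetric-exponential of parameter $1/\sqrt{2\lambda}$ has density $\frac{\sqrt{2\lambda}}{2\sigma_i}\exp(-\sqrt{2\lambda}\,|y-m|/\sigma_i)$, and the conditional density of $\tau_{i+1}\sim GIG(2\lambda,2\lambda\mu_{i+1}^2,\tfrac12)$ with $\mu_{i+1}=|y-m|/(\sigma_i\sqrt{2\lambda})$ is obtained from the GIG density in the footnote using $K_{1/2}(z)=\sqrt{\pi/(2z)}\,e^{-z}$, giving $\sqrt{\lambda/\pi}\,e^{\sqrt{2\lambda}|y-m|/\sigma_i}\,t^{-1/2}\exp(-\lambda t-(y-m)^2/(2\sigma_i^2 t))$. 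The product of these two densities telescopes the exponential factor $e^{\sqrt{2\lambda}|y-m|/\sigma_i}$ and reproduces exactly $p(y,t)$ above.

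The key algebraic identity underpinning the matching is the classical Laplace--Gaussian--Exponential mixing: if $N\sim\mathcal N(0,1)$ and $E\sim\mathrm{Exp}(\lambda)$ are independent, then $\sigma\sqrt{E}\,N$ is symmetric-exponential with parameter $\sigma/\sqrt{2\lambda}$, as one checks by the integral $\int_0^\infty e^{-as^2-b/s^2}\,ds=\frac{\sqrt{\pi}}{2\sqrt{a}}\,e^{-2\sqrt{ab}}$ applied with $a=\lambda$, $b=(y-m)^2/(2\sigma_i^2)$ after the substitution $t=s^2$. This same integral identity is what allows the conditional law of $\Delta T_{i+1}$ given $\bar X_{i+1}$ to be recognised as a $GIG(2\lambda,2\lambda\mu_{i+1}^2,\tfrac12)$. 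The main technical point is simply bookkeeping of the constants so that the normalising factors $(a/b)^{p/2}/(2K_{1/2}(\sqrt{ab}))$ collapse to $\sqrt{\lambda/\pi}$; no probabilistic subtlety is involved beyond this.

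Finally, I would close the induction by noting that, under the inductive hypothesis that the past $\{(\rho_j,\bar X_j,\Delta T_j)\}_{j\le i}\stackrel{\mathcal L}{=}\{(\rho_j,\mathtt Y_j,\tau_j)\}_{j\le i}$, the pointwise equality of conditional one-step densities established above transfers the equality in law to the level $i+1$ by Fubini. Since the construction of $(\mathtt Y_{i+1},\tau_{i+1})$ uses only $(\rho_{i+1},\mathtt Y_i)$ and fresh independent randomness, and similarly for the original sequence, the Ionescu--Tulcea construction (or a direct finite dimensional distribution argument) yields the equality of the full sequences. The only step requiring care is the one-step density calculation; everything else is formal.
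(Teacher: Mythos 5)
Your proposal is correct and takes essentially the same route as the paper: both reduce the claim to a one-step factorization of the joint density of $(\bar X_{i+1},\Delta T_{i+1})$, given the past, into a symmetric-exponential marginal for the space increment times a $GIG(2\lambda,2\lambda\mu^2,\tfrac12)$ conditional for the time increment (identified via $K_{1/2}(z)=\sqrt{\pi/(2z)}\,e^{-z}$), then extend by conditioning. The only cosmetic difference is that you obtain the marginal via the direct integral identity $\int_0^\infty e^{-as^2-b/s^2}\,ds=\tfrac{\sqrt{\pi}}{2\sqrt a}e^{-2\sqrt{ab}}$, whereas the paper computes the Laplace transform $\E[e^{\theta W_{T_1}}]=\lambda/(\lambda-\theta^2/2)$ and inverts it.
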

	\begin{proof}
		Let $ \rho $ be a Bernoulli random variable, independent of the exponential random variable $ T_1 $ and the Wiener process $ W $ which is also independent of $ T_1 $. Note that the joint density of $ (\rho x+(1-\rho)(2L-x) + \sigma_0 W_{T_1},T_1) $ conditioned to $ \rho $ is given by  
		\begin{align}
			\frac{\lambda}{\sqrt{2\pi\sigma_0^2 s}}\exp\left(-\frac{(y-\rho x-(1-\rho)(2L-x))^2}{2\sigma_0^2s}-\lambda s\right)1_{\{s>0\}}.
			\label{eq:func}
		\end{align}
		
		First, we compute the marginal density of $ \rho x+(1-\rho)(2L-x)+\sigma_0 W_{T_1} $.
		This is done using characteristic functions as follows.
		%		\begin{align*}
		%			\mathbb{E}\left[e^{\sqrt{-1}\theta W_{T_1}} \right]=\frac{1}{2}\left(\frac{1}{1+\frac{\sqrt{-1}\theta}{\sqrt{2\lambda}}}
		%			-\frac{1}{1-\frac{\sqrt{-1}\theta}{\sqrt{2\lambda}}}\right).
		%		\end{align*}
		%	The result on the right-hand side above corresponds to the characteristic function of a symmetric exponential random variable with parameter $ 1/\sqrt{2\lambda} $. 
		Using that $T_1$ is exponentially distributed and independent from $W$ we have
		\begin{align*}
			\mathbb{E}[e^{\theta W_{T_1}}] = \mathbb{E}[\mathbb{E}[e^{\theta W_{s}}|T_1]\big|_{s= T_1}]= \mathbb{E}[e^{\frac{1}{2}\theta^2 T_1}] = \int^\infty_0 {\lambda} e^{(\frac{1}{2}\theta^2 -\lambda)s} ds = \frac{\lambda}{\lambda-\frac{1}{2}\theta^2 }.
		\end{align*}
		%\textcolor{red}
		{This corresponds to the Laplace transform of symmetric exponential distribution with parameter $ 1/\sqrt{2\lambda} $.
			%More specifically, the density of $F_i$ is given by $\sqrt{2\lambda}e^{-\sqrt{2\lambda} x}$ and the moment generating function of $\wt Z_i$ is
			%\begin{align*}
			%\mathbb{E}[e^{\theta \wt Z_i}] = \frac{1}{2}\left[\mathbb{E}[e^{-\theta F_i}]+ \mathbb{E}[e^{\theta F_i}]\right] = \frac{1}{2}\left[\frac{\sqrt{2\lambda}}{\sqrt{2\lambda} + \theta} + \frac{\sqrt{2\lambda}}{\sqrt{2\lambda}-  \theta}\right] = \frac{2\lambda}{2\lambda -\theta^2 }.
			%\end{align*}
			From direct computation, the density of $\tilde Z_1$ is given by
			\begin{align*}
				f_{{W_{T_1}}}(y) = f_{{\tilde Z_1}}(y) = \frac{1}{2}\sqrt{2\lambda} e^{-\sqrt{2\lambda}{|y|}}, \quad y \in \mathbb{R}.
			\end{align*}
			Then, the conditional density function of $ \mathtt{Y}_1$ given $\rho$ is given by
			\begin{align*}
				f_{ \mathtt{Y}_{1}}(y)  = \frac{1}{2\sigma_0} \sqrt{2\lambda} e^{-\frac{|y - \rho x - (1-\rho)(2L-x)|}{\sigma_0}\sqrt{2\lambda}} = \frac{1}{2\sigma_0}\sqrt{2\lambda}e^{-2\lambda\mu_1 }, \quad y \in \mathbb{R},
			\end{align*}
			with $\mu_1 = \frac{|y - \rho x - (1-\rho)(2L-x)|}{\sigma_0{\sqrt{2\lambda}}}$. To obtain the conditional density of $\tau_1$ given $ \mathtt{Y}_1$ and $\rho$, we see that 
			\begin{align*}
				f_{\tau_1 |  \mathtt{Y}_1}(s|y) & = \frac{\lambda}{\sqrt{2\pi\sigma_0^2 s}}\exp\left(-\frac{(y-\rho x-(1-\rho)(2L-x))^2}{2\sigma_0^2s}-\lambda s\right)1_{\{s>0\}} 2\sigma_0\frac{1}{\sqrt{2\lambda}} e^{2\lambda\mu_1 }\\
				& = \frac{\sqrt{\lambda}}{\sqrt{\pi s}}\exp\left(-\lambda \mu_1^2 s^{-1}-\lambda s\right)1_{\{s>0\}}\exp\left(2\lambda\mu_1\right).
				%   	 & = \frac{\sqrt{\lambda}}{\sqrt{\pi s}}\exp\left(-\frac{1}{s}\left(\lambda \mu^2 + 2\lambda\mu s +\lambda s^2\right)\right)1_{\{s>0\}}\\
				%& = \frac{\sqrt{\lambda}}{\sqrt{\pi s}}\exp\left(-\frac{\lambda }{s}\left(\mu+s\right)^2\right)1_{\{s>0\}}
			\end{align*}
			By using the fact that $K_{\frac{1}{2}}(z) = \sqrt{\frac{\pi}{2}}\frac{e^{-z}}{\sqrt{z}}$, where $K_\nu(z)$ is a Bessel function of the second kind, we can deduce that the above is a Generalized Inverse Gaussian density with parameters $( a,b,p)$ where $a = 2\lambda$, $b= 2\lambda \mu_1^2$, $p = \frac{1}{2}$.}
		%	One uses the density of this distribution to obtain the conditional density of $ T_1 $.  Given the functional form obtained in \eqref{eq:func}, it is not hard to see that $ T_1 $ is distributed according to a inverse Gaussian distribution with mean parameter $ \frac{|y-\rho x-(1-\rho)(2L-x)|}{\sqrt{2\lambda}\sigma} $ and shape parameter $ \lambda $.
		In order to obtain the result, it is enough to do it by partitioning the expectation using $ 1_{\{T_1<...<T_n<T<T_{n+1}\}} $
		which leads to the announced result. Actually, given the above result, it is not difficult to obtain
		\begin{align*}		&\mathbb{E}\left[f(\bar{X}_{i},\Delta T_i)1_{\{N_T\geq i\}}\,\Big|\,\bar{X}_{i-1},\rho_{i-1}, \Delta T_{j},\,\,j=1\dots i-1\right] \\
			& = \mathbb{E}\left[f\big(\mathtt{Y}_i,\tau_i\big)1_{\{N^\tau_T\geq i\}}\,\Big|\, \mathtt{Y}_{i-1}=\bar{X}_{i-1},\rho_{i-1}, \tau_j = \Delta T_{j}, \,j=1\dots i-1\right].
		\end{align*}
		Here $ N^\tau $ denotes the renewal process generated from the times between jumps given by $ \{\tau_i;i\in\mathbb{N}\} $.
	\end{proof}
	
	%At this point, in order to keep the notation compact, we will continue to use the notation $ \bar{X}_i $ instead of the  we will let $ \tau_i $, $ i\in\mathbb{N} $  denote the sequence of random variables constructed in this way which have the same law as the distribution of $ T_{i+1} $ given $ \bar{X}_i,\bar{X}_{i+1} $. 
	
	On the set $ \{N^\tau_T=n\} $ and for the last interval $[ \zeta_n,\bar{\tau}^n ]$, we remark that conditioning on the space variables does not imply any change in the law of $ \bar{\tau}^n $ except that its parameter becomes $ (\Delta_{n+1} \mathtt{Y})^2:=(L-\mathtt{Y}_n)^2$.  That is, the law of $ \bar{\tau}^n $ is a L\'evy distribution with parameter $|\Delta_{n+1} \mathtt{Y}| / \sigma_n>0$. In other words, the probability density of $\bar \tau^n$ is given by $ p_{\bar{\tau}^n}(s)=\frac{|\Delta_{n+1} \mathtt{Y}|}{\sqrt{a_n2\pi s^3}}\exp(-\frac{(\Delta_{n+1} \mathtt{Y})^2}{2a_ns}) $, $ s>0 $ with $ p_{\bar{\tau}^n}(0)=0$ by continuity. Here, we abuse the notation slightly by letting $ \zeta_j= (\sum_{i=1}^j\tau_i) \wedge T$. A similar comment applies to $ \sigma_i $, $ \mathcal{I}_i $ and $ \mathbb{E}_{M,k,n}\left[\cdot\right] $ which we define for a positive random variable $M$ being measurable with respect to $\sigma( \mathtt{Y},\zeta,\rho)$ as 
	\begin{align*}
		\E_{M,k,n}[X]:=\E[XM^{-1}\I_\seq{N^\tau_T=n} \,|\, \mathcal{F}_{\zeta_k}, \mathtt{Y},\rho].
	\end{align*}
	Here, $ \mathcal{F}_{\zeta_k}:=\sigma(\zeta_i;i\leq k) $. 
	In the first part, we will use $ M=1 $. Later, $ M $ will denote the Malliavin variance whose inverse moment properties are discussed in Section \ref{sec:Mv}.

	This completes our set-up for the consideration of differential calculus on this space. 
	%	In fact, the law of the jump times between jump intervals is originally exponential and it changes to a generalized inverse Gaussian distribution after conditioning with respect to the space values of the Markov chain $ \bar{X} $. 	
	Using this new set-up, one clearly has that the following basic IBP formula is satisfied for $ n\in\mathbb{N}_0 $ and $ f\in\mathscr{C}^1_b(\mathbb{R}) $ with $ f(T)=0 $:
	\begin{align}
		\label{eq:Last}
		\E_{1,n,n}[f'(\bar{\tau}^n)\I_\seq{\zeta_n+\bar{\tau}^n<T}]=&\E_{1,n,n}[f(\bar{\tau}^n) \widehat{{\mathcal{I}}}_{n+1}(1)\I_\seq{\zeta_n+\bar{\tau}^n<T}],\\
		\widehat{{\mathcal{I}}}_{n+1}(G):=&G\left\{\frac{3}{2\bar{\tau}^n
		}-\frac{(\Delta_{n+1} \mathtt{Y})^2}{2a_n(\bar{\tau}^n)^2} \right\}-{D}_{\bar{\tau}^n}G.
		\nonumber
	\end{align}
	In the above formula, conditioned on $  \mathtt{Y} $ and $ \rho $, we assume that $ G=G(\bar{\tau}^n) $ for $ G\in\mathscr{C}^\infty_p(\mathbb{R}) $ so that $ D_{\bar{\tau}^n}G=G'(\bar{\tau}^n) $.
	We also remark that the result is applicable if $ f $ or $ G $ depend on $\zeta_1, \cdots, \zeta_n$ or $\bar{X}  $. 
	
	Due to Lemma \ref{lem:8} in the Appendix, we have that for a positive non-random constant $ C>0 $
	\begin{align}
		\label{eq:estTn}
		\E_{1,n,n}\left [ \left|\widehat{{\mathcal{I}}}_{n+1}(G)\right|\I_\seq{\zeta_n+\bar{\tau}^n<T}\right ]\leq C\frac{\sup_{t\in [0,T]}(|G(t)|+|G'(t)|)}{(\Delta_{n+1} \mathtt{Y})^2}.
	\end{align}
	It is worth noting that the inequality described above may be referred to as a spatial degeneration inequality, mirroring the circumstances encountered when examining IBP formulas in the context of stopped processes, where analogous estimates were denoted as time degeneration inequalities.

	In order to give the IBP formula, we need to introduce the following operators.
	We define\footnote{One may define the equivalent spaces $ {\mathbb{S}} $ as in the case of stopped diffusions but we refrain to do it in order to simplify the presentation.} for a function $ G\in \mathscr{C}^1_p(\mathbb{R}) $, the r.v. $ G\equiv G(\tau_i) $.
	We define the operators to $ ({\mathtt{I}}_i,{\mathtt{D}}_i)$ for $ i\leq n $ on the set $ \{N^\tau_T=n\} $. That is, for $ G\equiv G(\tau_i) $,
	\begin{align*}
		{{\mathtt{I}}}_{i}(G):=&G\left(\lambda+(2\tau_i)^{-1}+\frac{(\Delta_i \mathtt{Y})^2}{2a_{i-1}\tau_i^2}\right)-\mathtt{D}_{i} G\\
		\mathtt{D}_{{i}}G:=&G'(\tau_i).
	\end{align*}
	\begin{remark}
		In the current framework, it is crucial to observe that, based on the construction outlined at the outset of this section, the random variables $ \Delta_i \mathtt{Y} $ exhibit no dependence on $ \tau_i $. Consequently, it follows that $ \mathtt{D}_i \Delta_j \mathtt{Y} = 0 $ for any $ j\in\mathbb{N} $.	\end{remark}
	We also have from Lemma \ref{lem:8}, the space degeneration estimates for a non-random constant $ C>0 $ and $ k\in\{0,1,2\} $:
	\begin{align}
		\label{eq:estk}
		\E_{1,i-1,n}\left[|{\mathtt{I}}_i(\tau_i^{-k})|
		\right]\leq &C\left({1+|\Delta_i\mathtt{Y}|^2+|\Delta_{i} \mathtt{Y}|^{-(2k+1)}}\right) 
	\end{align}
	
	It's important to note that in many situations, we may employ a function $ G(\tau_1,\cdots,\tau_n) $ defined on the event $ {N^\tau_T=n} $. In such cases, the definitions provided above are to be understood conditionally to all times between jumps except $ \tau_i $, besides the conditioning involving spatial variables and all $ \rho_i $ for $ i\in\mathbb{N} $.
	
	Furthermore, we will continue to use $ \delta_t(\tau_i) $ to denote the Dirac delta distribution function at the point $ t $.
	
	Similar to the approach taken in \cite{frikha:kohatsu:li}, we anticipate that employing the symbol $ {\mathtt{I}}_i $ for the dual operator of the derivative operator $ {\mathtt{D}}_i$ will not result in confusion, as the context will consistently clarify the object under discussion. Additionally, the formulas for $ {\mathcal{I}}_i $ and {$\mathcal{D}_i$} which are used from now on are restated using $ \mathtt{Y} $ instead of $ \bar{X} $. That is, we adopt the notation convention of using the same symbol, despite the possibility of being expressed with $\mathtt{Y}$.
	\begin{lem}
		\label{lem:10a}
		With the above definitions, we have the following IBP formula for $ 0\leq t_1<t_2 $ and $ f, G\in \mathscr{C}^1_p (\mathbb{R})$:
		\begin{align}
			\nonumber
			\E_{1,i-1,n}	\left[f'({\tau_i})G({\tau_i})\I_\seq{t_1<\tau_i<t_2}
			\right]=&\E_{1,i-1,n}
			\left[f({\tau_i}){{\mathtt{I}}}_{i}(G)\I_\seq{t_1<\tau_i<t_2}
			\right]\\
			&+\E_{1,i-1,n}\left[f({\tau_i})G(\tau_i)(\delta_{t_2}({\tau_i})-\delta_{t_1}({\tau_i}))\right].
			\label{eq:timeIBP}
		\end{align}
		In the particular case that $ t_1=0 $, using that $  \mathtt{Y}_{i-1}\neq  \mathtt{Y}_i $ a.s., the above formula reduces to 
		\begin{align}
			\E_{1,i-1,n}	\left[f'({\tau_i})G({\tau_i}) \I_\seq{0<\tau_i<t_2}\right]=&\E_{1,i-1,n}\left[f({\tau_i}){{\mathtt{I}}}_{i}(G)\I_\seq{0<\tau_i<t_2}
			\right]+\E_{1,i-1,n}\left[f({\tau_i})G(\tau_i)\delta_{t_2}({\tau_i})
			\right].
			\label{eq:timeIBP0}
		\end{align}
	\end{lem}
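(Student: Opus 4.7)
The lemma is a classical one-dimensional integration by parts against the conditional density of $\tau_i$ identified in Lemma \ref{lem:8a}. The plan is to: (i) extract that density, (ii) verify that its negative log-derivative matches the coefficient of $G$ in $\mathtt{I}_i(G)$, and (iii) run standard IBP on the interval $(t_1,t_2)$.

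Under the conditioning of $\E_{1,i-1,n}[\cdot]$, namely on $\mathcal{F}_{\zeta_{i-1}}$, $\mathtt{Y}$ and $\rho$, Lemma \ref{lem:8a} gives that $\tau_i$ is independent of the other $\tau_j$'s and carries the GIG density
\[
p_i(s) = C_i\, s^{-1/2} \exp\!\Big(-\lambda s - \tfrac{(\Delta_i \mathtt{Y})^2}{2 a_{i-1} s}\Big), \qquad s > 0,
\]
where $C_i$ depends on $\mathtt{Y}$, $\rho$ but not on $\tau_i$. A direct computation of the log-derivative then identifies
\[
-\frac{p_i'(s)}{p_i(s)} = \lambda + \frac{1}{2s} + \frac{(\Delta_i\mathtt{Y})^2}{2 a_{i-1} s^2},
\]
which is exactly the bracketed weight multiplying $G$ in the definition of $\mathtt{I}_i(G)$, while $\mathtt{D}_i G = G'(\tau_i)$ plays the role of the derivative of the test function.

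With the density in hand, apply the usual one-dimensional IBP on $(t_1,t_2)$, for each fixed realization of the other random variables:
\[
\int_{t_1}^{t_2} f'(s)\, G(s)\, p_i(s)\,ds = \bigl[f G p_i\bigr]_{t_1}^{t_2} - \int_{t_1}^{t_2} f(s)\bigl(G'(s)\, p_i(s) + G(s)\, p_i'(s)\bigr)\,ds.
\]
Rewriting the second integrand as $f(s)\,\mathtt{I}_i(G)(s)\,p_i(s)$ via the identification above, and interpreting the boundary evaluations $f(t_\ell)G(t_\ell)p_i(t_\ell)$ as density-evaluations (namely as the conditional expectation of $f(\tau_i)G(\tau_i)\delta_{t_\ell}(\tau_i)$ in the same sense as the Dirac expressions already employed in the paper, cf.\ the discussion preceding Definition \ref{def:1}), yields \eqref{eq:timeIBP}. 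For \eqref{eq:timeIBP0}, since $|\Delta_i\mathtt{Y}| > 0$ almost surely, the factor $\exp(-(\Delta_i\mathtt{Y})^2/(2 a_{i-1} s))$ forces $p_i(s)\to 0$ faster than any polynomial as $s \to 0^+$, so the lower boundary term $f(0)G(0)p_i(0)$ simply vanishes.

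The only mildly delicate point is that $\{N^\tau_T = n\}$ depends on $\tau_i$ through the sum $\sum_j \tau_j$, so the IBP in the $\tau_i$-variable must be performed after integrating out $\tau_j$ for $j \neq i$ against a measure that is $\tau_i$-independent; this is a Fubini-type rearrangement and does not alter the form of $\mathtt{I}_i$. I expect this bookkeeping, rather than any single analytic step, to be the only nontrivial point, the rest being a standard single-variable computation.
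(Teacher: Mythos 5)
Your overall approach---extract the conditional GIG density from Lemma \ref{lem:8a}, compute its log-derivative, and perform standard one-dimensional integration by parts with the boundary evaluations identified as Dirac masses---is exactly the one the paper points at (it simply states ``the proof follows using the usual IBP formula for Lebesgue integrals''). However, you have a sign error in the log-derivative computation. With
\[
p_i(s) = C_i\,s^{-1/2}\exp\!\left(-\lambda s - \frac{(\Delta_i\mathtt{Y})^2}{2a_{i-1}s}\right),
\]
differentiating $\log p_i$ gives
\[
-\frac{p_i'(s)}{p_i(s)} = \lambda + \frac{1}{2s} - \frac{(\Delta_i\mathtt{Y})^2}{2a_{i-1}\,s^2},
\]
because $\frac{d}{ds}\bigl(-A/s\bigr) = A/s^2$. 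Your claimed identity has a $+$ in front of the last term, which does not follow from the computation you describe. Note that the analogous operator $\widehat{\mathcal{I}}_{n+1}$ for the last (L\'evy-distributed) interval in \eqref{eq:Last} correctly carries the $-$ sign (the L\'evy density has the same exponential factor and the $-$ appears there), so the $+$ sign in the displayed definition of $\mathtt{I}_i$ immediately preceding the lemma appears to be a typographical error in the paper; with the definition taken literally, the lemma would be false, and your proof silently reproduces the inconsistency rather than catching it. A correct proof must carry the minus sign through, which then dictates $\mathtt{I}_i(G)=G\bigl(\lambda+(2\tau_i)^{-1}-\frac{(\Delta_i\mathtt{Y})^2}{2a_{i-1}\tau_i^2}\bigr)-\mathtt{D}_iG$. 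The remaining ingredients of your argument---the interpretation of the boundary evaluations as $\delta_{t_\ell}(\tau_i)$, the vanishing of the lower boundary as $s\to 0^+$ using $|\Delta_i\mathtt{Y}|>0$ a.s., and the remark that the dependence of $\{N^\tau_T=n\}$ on $\tau_i$ needs a Fubini-type rearrangement---are reasonable and align with what the paper's terse proof evidently intends.
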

	\begin{proof}
		The proof follows using the usual IBP formula for Lebesgue integrals.	
	\end{proof}

	The next result states the independence of the weight $ \theta^j $ with respect to the noise in $ \tau^i $ once they are rewritten using  the set-up stated at the beginning of this section. This result simplifies calculations greatly.

	\begin{lem}
		\label{lem:4}
		$ {\mathtt{D}}_i{\theta}^j=0 $ for $ i\neq j $.
	\end{lem}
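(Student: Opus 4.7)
The plan is to verify directly that, once $\theta^j$ is rewritten in the $(\mathtt{Y},\rho,\tau)$-framework provided by Lemma \ref{lem:8a}, it depends on the vector $(\tau_1,\dots,\tau_{N^\tau_T})$ only through the single coordinate $\tau_j$. Since $\mathtt{D}_i$ is by definition the partial derivative with respect to $\tau_i$ holding the other $\tau$-coordinates and the spatial variables $(\mathtt{Y},\rho)$ fixed, this dependency claim immediately yields $\mathtt{D}_i\theta^j=0$ for $i\neq j$.

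To carry this out, I would start from the explicit expression \eqref{eq:ab} and examine each factor. The indicator $\I_\seq{\mathtt{Y}_j>L}$, the Bernoulli sign $2\rho_j-1$, the coefficients $c_1^j=b(\mathtt{Y}_j)$ and $c_2^j=\tfrac12(a(\mathtt{Y}_j)-a(\mathtt{Y}_{j-1}))$, and the flow derivatives $\mathcal{D}_jc_1^j$, $\mathcal{D}_jc_2^j$, $\mathcal{D}_j^2c_2^j$ are all smooth functions of $(\mathtt{Y}_{j-1},\mathtt{Y}_j,\rho_j)$ alone. By the construction in Lemma \ref{lem:8a}, where the sequence $(\mathtt{Y}_i,\rho_i)$ is generated first from the $\tilde Z_i$'s and each $\tau_i$ is drawn only afterwards as $\mathrm{GIG}(2\lambda,2\lambda\mu_i^2,\tfrac12)$ conditionally on $(\mathtt{Y},\rho)$, none of these factors depends on any $\tau_i$; this is the same observation already used in the Remark preceding Lemma \ref{lem:10a} to assert $\mathtt{D}_i\Delta_j\mathtt{Y}=0$.

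The only terms requiring a closer look are $\mathcal{I}_j(1)$ and $\mathcal{I}_j^2(1)$. In the reconstructed framework, \eqref{eq:I1} reads $\mathcal{I}_j(1)=\tilde Z_j/(\sigma_{j-1}\tau_j)$ with $\sigma_{j-1}\tilde Z_j=\mathtt{Y}_j-\rho_j\mathtt{Y}_{j-1}-(1-\rho_j)(2L-\mathtt{Y}_{j-1})$ independent of $\tau$ by construction, so the only $\tau$-dependence enters through the denominator $\tau_j$. More generally, \eqref{eq:link:integral:hermite:pol} gives $\mathcal{I}_j^\ell(1)=(-1)^\ell\mathcal{H}_\ell(a_{j-1}\tau_j,\sigma_{j-1}\tilde Z_j)$ for $\ell=1,2$, again a function of $\tau$ only through $\tau_j$. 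Collecting these observations, $\theta^j$ is a measurable function of $(\mathtt{Y}_{j-1},\mathtt{Y}_j,\rho_j,\tau_j)$ alone, so $\mathtt{D}_i$ with $i\neq j$ annihilates it.

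The genuinely delicate point—really the only obstacle—is to remember that one must evaluate $\mathcal{I}_j^\ell(1)$ in the reconstructed probability space of Lemma \ref{lem:8a}, and not in the original Brownian one where $Z_j=W_{\zeta_j}-W_{\zeta_{j-1}}$ has variance $\tau_j$, making the $\tau_j$-dependence of $\mathcal{I}_j(1)$ entangled with the noise itself. After the reparametrization, the noise $\tilde Z_j$ is drawn independently of the $\tau$-vector and this entanglement disappears, reducing the lemma to the factor-by-factor inspection above.
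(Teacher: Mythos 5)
Your proposal is correct and follows the same route as the paper's own (much terser) proof: the paper simply observes that $\tilde{Z}_j = \sigma_{j-1}^{-1}\Delta_j\mathtt{Y}$ depends only on $(\mathtt{Y},\rho)$ and then reads off from \eqref{eq:ab} that $\theta^j$ is a function of $(\mathtt{Y},\rho,\tau_j)$ alone, whence $\mathtt{D}_i\theta^j=0$ for $i\neq j$. Your factor-by-factor inspection of \eqref{eq:ab} and your explicit warning that one must work in the reconstructed $(\mathtt{Y},\rho,\tau)$-space of Lemma \ref{lem:8a}, not the original Brownian one, are exactly the points the paper's one-line argument is built on.
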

	\begin{proof}
		Note that as $ \tilde{Z}_j=\sigma_{j-1}^{-1}\Delta_{j} \mathtt{Y} $, one obtains that $ \theta^{j} $ is only a function of  $ ( \mathtt{Y},\rho,\tau_j)$ by observing \eqref{eq:ab}. Using that $ j\neq i $, we get the result.	
	\end{proof}

	Building upon the information provided in the preceding proof and in conjunction with \eqref{eq:estk} and Lemma \ref{lem:8}, we can derive the following space degeneration estimates:
	\begin{equation}
		\label{eq:estthetai}
		\begin{aligned}
			\E_{1,i-1,n}\left[|\theta^i|
		\right]\leq &C\left({1+|\Delta_{i} \mathtt{Y}|^3}\right),\\
		\E_{1,i-1,n}\left[|{\mathtt{I}}_i(\theta^i)|
		\right]\leq &C\left({1+|\Delta_{i} \mathtt{Y}|^{-2}}+|\Delta_i\mathtt{Y}|^5\right).
		\end{aligned}
	\end{equation}

	In fact, using \eqref{eq:ab} and \eqref{eq:I1} twice, we see that by linearity it is enough to estimate and upper bound $ \E_{1,i-1,n}\big[|{\mathtt{I}}_i({{\mathcal{I}}}^{j}_{i}(c_j^i))|
	\big] $ for $ j=1,2 $. Now we apply \eqref{eq:exta}, \eqref{eq:link:integral:hermite:pol} and the linearity of $ {\mathtt{I}}_i(\cdot) $ to obtain
	\begin{align*}
		\E_{1,i-1,n}\left[|{\mathtt{I}}_i({{\mathcal{I}}}^{j}_{i}(c^i_j))|
		\right]\leq \sum_{k=0}^j\E_{1,i-1,n}\left[|{\mathtt{I}}_i\left(\mathcal{H}_{j-k}(a_{i-1}\tau_i,\sigma(\mathtt{Y}_{i-1})\tilde{Z}_i)\right)\mathcal{D}_i^kc^i_j|
		\right].
	\end{align*}
Using explicit expressions for Hermite polynomials, the definition of $ \mathtt{I}_i $ and \eqref{eq:estk}, we obtain the second estimate in \eqref{eq:estthetai}.
	
	\subsection{The transfer formula}
	%	In order to describe the transfer formula, we need to simplify the length of equations and for this reason, we let
	%	\begin{align}
	%		\label{eq:not}
	%		\E_{M,k,n}[X]:=\E[XM^{-1}\I_\seq{N_T=n}/\mathcal{F}_{\zeta_k},\bar{X},\rho].
	%	\end{align}
	The subsequent equalities are all stated with respect to conditional expectations involving the spatial variables. In particular, the symbol $\stackrel{\E_{M,k,n}}{=}$ denotes equality within the respective conditional expectation context.
	
	With this notation established, we can express the formula for transferring derivatives in the final interval as follows:
	\begin{lem} Let $ f\in \mathscr{C}^1_p(\mathbb{R}) $ with $ f(T)=0 $ then, with $ \overleftarrow{\theta}_{n+1}^e=\hat{\theta}^n$, it holds 
		\begin{align}
			\label{eq:lastIBPst}
			\E_{M,n,n}[f'(\zeta_n+\bar{\tau}^n)\I_\seq{\zeta_n+\bar{\tau}^n<T}\hat{\theta}^n]=\partial_{\zeta_n}
			\E_{M,n,n}[f(\zeta_n+\bar{\tau}^n)\I_\seq{\zeta_n+\bar{\tau}^n<T}\overleftarrow{\theta}_{n+1}^e].
		\end{align}
		Also, we have that, on the set $ \{\zeta_n=T \}$, 	$ \E_{M,n,n}[f(\zeta_n+\bar{\tau}^n)\I_\seq{\zeta_n+\bar{\tau}^n<T}\hat{\theta}^n]=0 $.
	\end{lem}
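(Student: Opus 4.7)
The plan is to reduce both sides of \eqref{eq:lastIBPst} to explicit integrals against the L\'evy density of $\bar{\tau}^n$ and then invoke Leibniz's rule for an integral with a moving upper endpoint. The key structural observation is that, conditionally on the $\sigma$-field underlying $\E_{M,n,n}$, the only remaining randomness is $\bar{\tau}^n$: the weight $\hat{\theta}^n$ and the normalizer $M^{-1}$ are $\sigma(\mathtt{Y},\zeta,\rho)$-measurable and therefore act as constants, while the conditional density of $\bar{\tau}^n$ is the L\'evy density
\begin{equation*}
p_{\bar{\tau}^n}(s)=\frac{|L-\mathtt{Y}_n|}{\sqrt{2\pi a_n s^{3}}}\exp\!\Bigl(-\frac{(L-\mathtt{Y}_n)^{2}}{2a_n s}\Bigr),\qquad s>0,
\end{equation*}
which depends only on $(\mathtt{Y}_n,a_n)$ and, crucially, not on $\zeta_n$.

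Using $\overleftarrow{\theta}_{n+1}^{e}=\hat{\theta}^n$ and viewing $\zeta_n=t$ as a free parameter, this lets me write
\begin{equation*}
\E_{M,n,n}\bigl[f(\zeta_n+\bar{\tau}^n)\I_{\{\zeta_n+\bar{\tau}^n<T\}}\overleftarrow{\theta}_{n+1}^{e}\bigr]=\frac{\hat{\theta}^n}{M}\int_{0}^{T-t}f(t+s)\,p_{\bar{\tau}^n}(s)\,ds.
\end{equation*}
Differentiating in $t$ by the Leibniz rule produces a boundary contribution $-f(T)\,p_{\bar{\tau}^n}(T-t)$ from the upper endpoint, which vanishes by the hypothesis $f(T)=0$, leaving
\begin{equation*}
\frac{\hat{\theta}^n}{M}\int_{0}^{T-t}f'(t+s)\,p_{\bar{\tau}^n}(s)\,ds=\E_{M,n,n}\bigl[f'(\zeta_n+\bar{\tau}^n)\I_{\{\zeta_n+\bar{\tau}^n<T\}}\hat{\theta}^n\bigr],
\end{equation*}
which is precisely \eqref{eq:lastIBPst}. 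Differentiation under the integral sign is justified by $f\in\mathscr{C}^{1}_{p}(\mathbb{R})$ together with the rapid decay of $p_{\bar{\tau}^n}$ at the origin and its integrability on bounded intervals away from $0$.

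For the second assertion, on $\{\zeta_n=T\}$ one has $\zeta_n+\bar{\tau}^n\geq T$ almost surely since $\bar{\tau}^n\geq 0$, so $\I_{\{\zeta_n+\bar{\tau}^n<T\}}$ vanishes identically and the conditional expectation is zero. I do not anticipate a serious obstacle in the argument; the only substantive point is to check that $p_{\bar{\tau}^n}$ and $\hat{\theta}^n$ are genuinely independent of $\zeta_n$ under the conditioning, which has been emphasized in the discussion preceding the lemma.
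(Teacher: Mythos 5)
Your proposal is correct and proceeds along essentially the same lines as the paper: both arguments reduce the conditional expectation to a deterministic integral against the L\'evy density $p_{\bar\tau^n}$ of $\bar\tau^n$ (which, as the paper emphasizes in the discussion around \eqref{eq:Last}, depends only on the space variables and not on $\zeta_n$), and both invoke Leibniz's rule for the moving-endpoint integral together with $f(T)=0$ to kill the boundary contribution at $s=T-\zeta_n$. For the second assertion your direct observation --- that the indicator $\I_{\{\zeta_n+\bar\tau^n<T\}}$ collapses to $\I_{\{\bar\tau^n<0\}}=0$ when $\zeta_n=T$ --- is a slightly more explicit way of expressing what the paper summarizes by saying the conditional expectation is a continuous function of $\zeta_n$ (the explicit integral representation you write down is manifestly continuous and vanishes at $\zeta_n=T$), so the two are effectively the same argument.
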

	\begin{proof} Given the conditioning we may assume without loss of generality that $ \hat{\theta}^n=1 $.
		Recall that the IBP formula for the last interval is obtained using  a L\'evy distribution as stated in \eqref{eq:Last}. Hence, by employing Leibniz's rule for interchanging derivatives and integrals, it can be deduced that the right-hand side of \eqref{eq:lastIBPst} is differentiable with respect to $ \zeta_n $, and:
		\begin{align*}
			&f'(\zeta_{n}+ {\bar{\tau}^{n}})
			\I_\seq{ {\bar{\tau}^{n}} < T- \zeta_{n}}
			\stackrel{\E_{M,n,n}}{=}f(\zeta_{n}+{\bar{\tau}^{n}})
			\I_\seq{ {\bar{\tau}^{n}} < T- \zeta_{n}}	\widehat{{\mathcal{I}}}_{n+1}(1)	.
		\end{align*}
		For the second result, it is enough to note that $ \E_{M,n,n}[f(\zeta_n+\bar{\tau}^n)\I_\seq{\zeta_n+\bar{\tau}^n<T}] $ is a continuous function of $ \zeta_n $.
	\end{proof}
	We would like to mention that the notation $\overleftarrow{\theta}_{n+1}^e$ is adopted from Section 5.1 of \cite{frikha:kohatsu:li}. We have retained this notation to emphasize its significance within the tree structure, as explained in \cite{frikha:kohatsu:li}.
	
	Now, let's present the general transfer formula for the remaining intervals.
	
	%Before that we need to consider the equivalent flow differentiation spaces for this problem. For this define $ S^n_T:={(t_1,....,t_n);0<t_1<...<t_n<T} $
	%\begin{align*}
	%\mathcal{T}_T:=\{X\in L^0(\P); X=f_n(\zeta_1,...,\zeta_n)\text{ on }\{N_T=n\} \text{ for  }f_n\in C^\infty(S^n_T) \}.
	%\end{align*}
	%Therefore for $ X\in \mathcal{T}_T $ one can naturally define $ \partial_{\tau_i}X={\mathcal{D}}_i_iX $ for $ i\leq n $ on the set $ \{N_T=n \}$. 

	\begin{lem}
		\label{lem:6}
		Let $ f\in \mathscr{C}^1_b(\mathbb{R}_0) $ with $ f(T)=0 $. It holds 
		%\begin{align}
		%\nonumber
		%\E_{M,i,n}[f'(\zeta_{i+1})\I_\seq{\zeta_{i+1}<T}\theta^{i+1}]=&
		%\partial_{\tau_i}\E_{M,i,n}[f(\zeta_{i+1})\I_\seq{\zeta_{i+1}<T}\overleftarrow{\theta}^e_{i+1}].
		%\end{align}
		%Here  $  $
		%\begin{align*}
		% \overleftarrow{\theta}^e_{i+1}:=&\theta^{i+1} 
		%\end{align*}
		%\begin{align*}
		%\overleftarrow{\theta}^c_{i+1}:=&-\partial_{\tau_{i}}\theta^{i+1},\\
		%\partial_{\tau_i}\theta^{i+1}:=&
		%2(2\rho_{i+1}-1)\lambda^{-1}\\
		%&\times\left\{-\tau_{i+1}^{-1}{\mathcal{I}}_{i,i+1}(c^{i+1}_1)-2\tau_{i+1}^{-1}{\mathcal{I}}^2_{i,i+1}(c^{i+1}_2)+{\mathcal{I}}_{i,i+1}(\partial_{\tau_{i+1}}c^{i+1}_1)+{\mathcal{I}}^2_{i,i+1}(\partial_{\tau_{i+1}}c^{i+1}_2)\right\},
		%\\
		%\partial_{\tau_{i+1}}c_1^{i+1}:=&b'(\bar{X}_{i+1})(2\tau_{i+1})^{-1}\Delta_{i+1}W,\\
		%\partial_{\tau_{i+1}}c_2^{i+1}:=&a'(\bar{X}_{i+1})(2\tau_{i+1})^{-1}\Delta_{i+1}W.
		%\end{align*}
		%The following boundary condition is satisfied
		%$ \E_{M,i,n}[(\Delta_{i+1}\bar{X})^2f(\zeta_{i+1})\I_\seq{\zeta_{i+1}<T}\overleftarrow{\theta}^e_{i+1}]\big|_{\zeta_i=T} =0$ and the following time degeneracy inequalities are satisfied:
		%\begin{align*}
		%\|\overleftarrow{\theta}^e_{i+1}\|_{0,p}+\|\overleftarrow{\theta}^c_{i+1}\|_{0,p}\leq C(\zeta_{i+1}-\zeta_i)^{-1/2}.
		%\end{align*}
		\begin{align}
			\E_{M,i,n}[f'(\zeta_{i+1})\I_\seq{\zeta_{i+1}<T}\theta^{i+1}]=&
			\partial_{\tau_i}\E_{M,i,n}[f(\zeta_{i+1})\I_\seq{\zeta_{i+1}<T}\overleftarrow{\theta}^e_{i+1}]
			\label{eq:4.1}.
			%\\
			%\nonumber&+\E_{M,i,n}[(\Delta_{i+1}\bar{X})^2f(\zeta_{i+1})\I_\seq{\zeta_{i+1}<T}\overleftarrow{\theta}^c_{i+1}]
		\end{align}
		Here, $ \overleftarrow{\theta}^e_{i+1}:=\theta^{i+1} $. 
		%and 
		%\begin{align*}
		%\overleftarrow{\theta}^c_{i+1}:=&-\partial_{\tau_{i}}\theta^{i+1},\\
		%\partial_{\tau_i}\theta^{i+1}:=&
		%2(2\rho_{i+1}-1)\lambda^{-1}\\
		%&\times\left\{-\tau_{i+1}^{-1}{\mathcal{I}}_{i,i+1}(c^{i+1}_1)-2\tau_{i+1}^{-1}{\mathcal{I}}^2_{i,i+1}(c^{i+1}_2)+{\mathcal{I}}_{i,i+1}(\partial_{\tau_{i+1}}c^{i+1}_1)+{\mathcal{I}}^2_{i,i+1}(\partial_{\tau_{i+1}}c^{i+1}_2)\right\},
		%\\
		%\partial_{\tau_{i+1}}c_1^{i+1}:=&b'(\bar{X}_{i+1})(2\tau_{i+1})^{-1}\Delta_{i+1}W,\\
		%\partial_{\tau_{i+1}}c_2^{i+1}:=&a'(\bar{X}_{i+1})(2\tau_{i+1})^{-1}\Delta_{i+1}W.
		%\end{align*}
		Moreover, the following boundary condition is satisfied
		$$ \E_{M,i,n}[f(\zeta_{i+1})\I_\seq{\zeta_{i+1}<T}\overleftarrow{\theta}^e_{i+1}]\big|_{\zeta_{i}=T} =0,$$  as well as the following time degeneracy inequality\footnote{Recall that the norm $ \|\cdot\|_{0,p} $ is a norm on the Wiener space conditional to $ \rho $ and $ N^\tau $.}:
		\begin{align*}
			\|\overleftarrow{\theta}^e_{i+1}\|_{0,p}\leq C(\zeta_{i+1}-\zeta_i)^{-1/2}.
		\end{align*}
	\end{lem}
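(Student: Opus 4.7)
The plan is to reduce the identity to a single Leibniz differentiation argument after exploiting the conditional structure provided by Lemma \ref{lem:8a}. Given the conditioning on $\mathcal{F}_{\zeta_i}$, $\mathtt{Y}$ and $\rho$ implicit in $\E_{M,i,n}$, the conditional law of $\tau_{i+1}$ is $GIG(2\lambda, 2\lambda\mu_{i+1}^2, 1/2)$ with $\mu_{i+1}$ a deterministic function of $\mathtt{Y}_i,\mathtt{Y}_{i+1}$, hence is functionally independent of $\tau_i$. Moreover, from the explicit expression \eqref{eq:ab}, the weight $\theta^{i+1}$ depends on $\tau$ only through $\tau_{i+1}$ (this is the content of Lemma \ref{lem:4}). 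Therefore the only $\tau_i$-dependence left inside $\E_{M,i,n}$ is the one contained in $\zeta_i = \tau_1+\cdots+\tau_i$, which enters the integrand through $\zeta_{i+1} = \zeta_i + \tau_{i+1}$ and through the cut-off $\I_{\{\zeta_{i+1}<T\}}$.

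Writing the conditional expectation as an iterated integral over $\tau_{i+1}, \tau_{i+2},\ldots$ against a conditional joint density independent of $\tau_i$, I would first integrate out everything except $\tau_{i+1}$ to obtain a function of the form
\begin{equation*}
	\E_{M,i,n}[f(\zeta_{i+1})\I_{\{\zeta_{i+1}<T\}}\theta^{i+1}] \;=\; \int_0^{T-\zeta_i} f(\zeta_i+u)\, \Phi(u)\, du,
\end{equation*}
where $\Phi(u)$ gathers the conditional density of $\tau_{i+1}$, the weight $\theta^{i+1}$ evaluated at $\tau_{i+1}=u$, and the contribution of the conditional expectation of $M^{-1}\I_{\{N^\tau_T=n\}}$ given $\tau_{i+1}=u$. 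Then I would differentiate in $\tau_i$ via Leibniz's rule, which produces exactly two terms: an interior term equal to $\int_0^{T-\zeta_i} f'(\zeta_i+u)\Phi(u)\,du = \E_{M,i,n}[f'(\zeta_{i+1})\I_{\{\zeta_{i+1}<T\}}\theta^{i+1}]$, and a boundary term $-f(T)\Phi(T-\zeta_i)$ which vanishes by the hypothesis $f(T)=0$. This proves \eqref{eq:4.1} with $\overleftarrow{\theta}^e_{i+1}=\theta^{i+1}$.

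The boundary identity at $\zeta_i = T$ is immediate, since $\tau_{i+1}>0$ a.s. forces $\zeta_{i+1}>T$, so the indicator $\I_{\{\zeta_{i+1}<T\}}$ is null. For the time degeneracy inequality, I would invoke the defining expression \eqref{eq:ab} to split $\overleftarrow{\theta}^e_{i+1}=\theta^{i+1}$ into the two pieces $\mathcal{I}_{i+1}^2(c_2^{i+1})$ and $\mathcal{I}_{i+1}(c_1^{i+1})$; the same estimates as in \eqref{eq:est2}, now applied on the interval $[\zeta_i,\zeta_{i+1}]$ (using $c_2^{i+1}=\tfrac{1}{2}(a_{i+1}-a_i)$ which is of order $|\Delta_{i+1}\mathtt{Y}|$ and the bound \eqref{eq:Hesta}), yield the $(\zeta_{i+1}-\zeta_i)^{-1/2}$ scaling, uniformly in $p$.

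The main (essentially only) obstacle is a bookkeeping one: one has to make sure that the joint conditional density of $(\tau_{i+1},\tau_{i+2},\ldots)$ and the factor $M^{-1}\I_{\{N^\tau_T=n\}}$ are indeed independent of $\tau_i$ given $\mathcal{F}_{\zeta_i},\mathtt{Y},\rho$, so that $\partial_{\tau_i}$ commutes cleanly with the integral and the Leibniz rule applies without generating further contributions. This is where the construction of Lemma \ref{lem:8a} is decisive: conditionally on the spatial chain $\mathtt{Y}$ and on $\rho$, the $\tau_j$'s are mutually independent with laws depending only on spatial data, so the only remaining $\tau_i$-dependence is in the deterministic upper bound $T-\zeta_i$ and in the translation $\zeta_i+\cdot$ of the test function, exactly as used above.
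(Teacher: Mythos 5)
Your proposal follows essentially the same route as the paper's (very terse) proof: a direct Leibniz-rule differentiation of the right-hand side of \eqref{eq:4.1}, exploiting that by Lemma~\ref{lem:4} the weight $\theta^{i+1}$ depends on the jump times only through $\tau_{i+1}$ and hence is insensitive to $\partial_{\tau_i}$, killing the boundary term via $f(T)=0$, noting that $\I_{\{\zeta_{i+1}<T\}}\equiv 0$ once $\zeta_i=T$ for the second assertion, and reading the $(\zeta_{i+1}-\zeta_i)^{-1/2}$ degeneracy from the explicit form \eqref{eq:ab} of $\theta^{i+1}$ together with \eqref{eq:Hesta} and the order-$(\zeta_{i+1}-\zeta_i)^{1/2}$ size of $c_2^{i+1}$ (the paper simply cites the analogous estimate from its companion paper). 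One small point of caution, which your more explicit write-up actually surfaces: the kernel you call $\Phi(u)$ is not purely a function of $u$, since the conditional expectation of $M^{-1}\I_{\{N^\tau_T=n\}}$ given $\tau_{i+1}=u$, $\mathcal{F}_{\zeta_i}$, $\mathtt{Y}$, $\rho$ carries a residual $\zeta_i$-dependence through the remaining time budget $T-\zeta_i-u$; a fully spelled-out Leibniz computation therefore produces a third term $\int f\,\partial_{\zeta_i}\Phi\,\mathrm{d}u$ which needs to be seen to vanish or to be absorbed into the definition of the dual operators. The paper's one-line proof is equally silent on this, so this is a shared, not a new, imprecision; but if you wanted to tighten the argument, that is the place to look.
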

	\begin{proof}
		The proof can be established through straightforward differentiation of the right-hand side of equation \eqref{eq:4.1}. In particular, note that $ \theta^{i+1} $ does not depend on $ \tau_i $. Concerning the boundary conditions, it's worth noting that we have $f(T) = 0$. Additionally, the time degeneracy estimate has been previously established in \cite{frikha:kohatsu:li}.
	\end{proof}
	
	\subsection{The IBP formula}
	The concepts applied here bear a resemblance to those used in the case of stopped processes, albeit with a shift in the roles of time and space.
	
	First, let's focus on the last interval. Without delving into technical details, we can derive the IBP formula on the set $\left\{{N_T=n}\right\}$ from Lemma \ref{lem:10a}:
	\begin{align*}
		&(\Delta_{n+1} \mathtt{Y})^2f'(\zeta_{n}+ {\bar{\tau}^{n}})
		\I_\seq{ {\bar{\tau}^{n}} \leq T- \zeta_{n}}\hat{\theta}^n
		\stackrel{\E_{M,n,n}}{=}(\Delta_{n+1} \mathtt{Y})^2
		f(\zeta_{n}+{\bar{\tau}^{n}})
		\I_\seq{ {\bar{\tau}^{n}} \leq T- \zeta_{n}}	\hat{\theta}^n\widehat{{\mathcal{I}}}_{n+1}(1)	.
	\end{align*}
	Furthermore, the above expression satisfies the following space degeneration estimate due to  \eqref{eq:estTn}: 
	\begin{align*}
		\left|
		(\Delta_{n+1} \mathtt{Y})^2
		f(\zeta_{n}+{\bar{\tau}^{n}})\hat{\theta}^n
		\widehat{{\mathcal{I}}}_{n+1}(1)\right|\I_\seq{\zeta_n+\bar{\tau}^n<T}\stackrel{\E_{M,n,n}}{\leq }   {C}\sup_{t\in[0,T]}|f(t)|.
	\end{align*}
	
	Now, to make use of {the transfer formula} provided in Lemma \ref{lem:6} for all time intervals leading up to the interval where the IBP formula will be applied, we define, for $k\geq i$
	\begin{align*}
		G_k(\zeta_k)\equiv G_k(n,\zeta_k, \mathtt{Y},\rho):=\E_{M,k,n}\left[f(\zeta_n+\bar{\tau}^n)\I_\seq{\zeta_n+\bar{\tau}^n<T}\hat{\theta}
		^n\prod_{j=k+1}^{n}\theta^j\right].
	\end{align*}
	With the definition provided above and using \eqref{eq:estthetai}, we can establish that $G_k(T)=0$, $|G_k(\zeta_k)|\leq C \prod_{j=k+1}^{n+1}(1+|\Delta_j \mathtt{Y}|^3)$. Consequently, the application of the transfer formulas \eqref{eq:lastIBPst} and \eqref{eq:4.1}, along with Lemma \ref{lem:10a}, yields
	\begin{align*}
		(\Delta_{i} \mathtt{Y})^2f'(\zeta_n+\bar{\tau}^n
		)
		\I_\seq{ 
			\zeta_n+\bar{\tau}^n<T
		}\prod_{j=i}^{n}\theta^j
		\stackrel{\E_{M,i-1,n}}{=}&
		(\Delta_{i} \mathtt{Y})^2G'_i(\zeta_i)\I_\seq{ \zeta_i \leq T}
		\theta^i\\
		\stackrel{\E_{M,i-1,n}}{=}&
		(\Delta_{i} \mathtt{Y})^2G_i(\zeta_i)\I_\seq{ \zeta_i \leq T}{{\mathtt{I}}}_i(\theta^i).
	\end{align*}
	%have the following IBP formula:
	%\begin{align*}
	%&(\Delta_{i}\bar{X})^2f'(\zeta_{i})
	%\I_\seq{ \zeta_i \leq T}\prod_{j=i}^{n+1}\theta^j
	%\stackrel{\E_{M,i-1,n}}{=}
	%(\Delta_{i}\bar{X})^2
	%f(\zeta_{i})
	%\I_\seq{ \zeta_i \leq T}\prod_{j=i+1}^{n+1}\theta^j	{{\mathcal{I}}}_i(\theta^i)	.
	%\end{align*}
	Again from \eqref{eq:estthetai}, the space degeneration estimate in this case is 
	\begin{align*}
		\left|(\Delta_{i} \mathtt{Y})^2
		f(\zeta_{i})\right|
		\I_\seq{ \zeta_i \leq T}\hat{\theta}^n\prod_{j=i+1}^{n} |\theta^j|	|{{\mathtt{I}}}_i(\theta^i)|
		\stackrel{\E_{M,i-1,n}}{\leq }
		{C}\sup_{t\in[0,T]}|f(t)|
		\prod_{j=i+1}^{n}
		%\I_\seq{\bar{X}_j>L}
		(1+|\Delta_j \mathtt{Y}|^3)
		%\I_\seq{\bar{X}_i>L}
		(1+|\Delta_i \mathtt{Y}|^7).
	\end{align*}

	We can now leverage the fact that, as indicated by Lemma \ref{lem:8a}, ${|\Delta_i \mathtt{Y}|= \sigma(\mathtt{Y}_{i-1})|\tilde Z_i|,i=1,...,N^\tau_T}$, where $\tilde Z_i$ represents an i.i.d. sequence of symmetric exponential random variables characterized by a parameter of $1/\sqrt{2\lambda}$.
	%\textcolor{red}{Is this true? I think $|\Delta_i Y| = \sigma(Y_{i-1})|\tilde Z_i|$ and $|\tilde Z_i| \sim \exp(1/\sqrt{2\lambda})$. I guess you are trying to say that the upper and lower bound is exponential?}
	%	 which can be deduced from the renormalization factor for the generalized inverse Gaussian distribution together with $ K_{1/2}(x)=\sqrt{\frac{\pi}{2x}}e^{-x} $. \textcolor{red}{Do we have this fact?} 
	Hence, for any integer $n$, 
	\begin{align*}
		&\E\left[\left|
		f(\zeta_{n}+{\bar{\tau}^n})\right|
		\I_\seq{ \zeta_n+\bar{\tau}^n\leq T}|\hat{\theta}^n|\left(\sum_{i=1}^{{ n }}(\Delta_{i} \mathtt{Y})^2\prod_{j=i+1}^{n}|\theta^j|	|{{\mathtt{I}}}_i(\theta^i)|\prod_{k=1}^{i-1}|\theta^k|+(\Delta_{n+1} \mathtt{Y})^2
		|\widehat{{\mathcal{I}}}_{n+1}(1)|\prod_{k=1}^{n}|\theta^k|
		\right)M^{-1}\I_\seq{N^\tau_T=n}\right]
		\\
		&\quad {\leq }
		(n+1)C^n\sup_{t\in[0,T]}|f(t)|\P(N_T=n).
	\end{align*}
	Here, $ M:=	\sum_{{i}=1}^{N^\tau_T+1}\left(\Delta_{i} \mathtt{Y}\right)^2 $. We remark here that once all the integration by parts have been performed, expressions for the weights can be written using   the original formulation in \eqref{sec:pre} due to Lemma \ref{lem:8a}.
	
	As it was done in \cite{frikha:kohatsu:li}, we may give an algebraic structure to the  the IBP formula. We therefore refer the reader to that reference for the notation and terminology that follows. 
 
 In this case, the structure is easier and it follows the same graph as in the stopped case except that all terms with arrows corresponding to $ (\overleftarrow{\theta}^c,\overleftarrow{\theta}^\partial) $ do not appear in this case.\footnote{We refer the reader for details on this ``path algebraic structure'' to Section 5.1 in  \cite{frikha:kohatsu:li}}
	%	
	%		
	%		The last term in the above equality is zero. In fact,
	%		\begin{lem}
	%			The boundary process  associated with the weighted integration by parts formula in \eqref{eq:5.1ab} is zero. That is, for $ n\geq 1 $ and any bounded function $ f $
	%			\begin{align*}
	%			\E\left[f(\tau_i)(\Delta_i\bar{X})^2\theta^{i+1}\delta_0(\tau_i)\theta^iM^{-1} \I_\seq{N_T=n}\right]=0.
	%			\end{align*}
	%			\end{lem}
	%		\begin{proof}
	%			The proof is carried out using explicit integral expressions and the fact that 
	%			\begin{align*}
	%			\lim_{x\rightarrow \infty }x^2e^{-x^2}=0.
	%			\end{align*}
	%			\end{proof}
	%		There is another boundary term which appears on the first term on the right hand side of Equation \eqref{eq:5.1ab}. 
	%		\begin{lem}
	%			Assume that $ f(T)=0 $ then 
	%			\begin{align*}
	%			&\E\left[	(\Delta_{i}\bar{X})^2 f(\zeta_{N_T}+ {\bar{\tau}^{N_T}}){{\mathcal{I}}}_{i}\left(\I_\seq{ {\bar{\tau}^{N_T}} \leq T- \zeta_{N_T}} \prod_{j=1}^{N_T}\theta_j\right)\right]\\
	%			=&
	%				\E\left[	(\Delta_{i}\bar{X})^2 f(\zeta_{N_T}+ {\bar{\tau}^{N_T}})\I_\seq{ {\bar{\tau}^{N_T}} \leq T- \zeta_{N_T}}{{\mathcal{I}}}_{i}\left( \prod_{j=1}^{N_T}\theta_j\right)\right]
	%			\end{align*}
	%			\end{lem}
	%		The case $ f(T)\neq 0 $ can also be dealt with at the cost of extra boundary terms.
	%		
	%	The above two lemmas show that in this case $ \overleftarrow{\theta}^c_j\equiv\overleftarrow{\theta}^\partial_j\equiv 0 $ and
	%	$ \overleftarrow{\theta}^e_j(\theta)\equiv\theta $ for $ j\leq N_T $. Therefore there is no merged boundary process or operator in this case.
	%	
	%
	%	
	In conclusion, from the above calculations one understands that the algebraic structure in this case is formed only of components $ I_{i}^{n+1}=(0,\ldots,0,I,e,\ldots,e) $, $ {i}\leq N_T+1 $. These components have exchange r.v.'s in all intervals $ [\zeta_k,\zeta_{k+1}] $ for $ k\in\{i,...,N_T\} $, denoted by $ e $, an integration by parts on the interval $[\zeta_{i-1},\zeta_{i}]   $, denoted by $ I $ and no change in weight r.v.'s in all previous intervals $ [\zeta_k,\zeta_{k+1}] $ for $ k\in\{0,...,i-2\} $ which is denoted by $ 0 $ in the component $ I_i^{n+1} $.

	Putting all the above reasoning together, one obtains the IBP formula for the exit time. For this, we need to define
	for $ i\leq n+1 $ on the set $ \{N_T=n\} $		\begin{align*}
		\theta^{{I}_{i}}:=&\hat{\theta}^n
		\prod_{j={i}+1}^{n}
		\overleftarrow{\theta}_j^e\times
		{\mathtt{I}}_{i}(
		{\theta}^{{i}})\times
		\prod_{l=1}^{{i}-1}{\theta}^l= \hat{\theta}^n
		\prod_{j={i}+1}^{n}
		\theta^j\times
		{\mathtt{I}}_{i}(
		{\theta}^{{i}})\times
		\prod_{l=1}^{{i}-1}{\theta}^l\\
		\theta^{{I}_{n+1}}:=&
		\hat{\theta}^n\hat
		{\mathcal{I}}_{n+1}(
		1)\times
		\prod_{l=1}^{n}{\theta}^l.
	\end{align*}
	
%	So far, the above notation will be used for the first term in \eqref{eq:PRF}. As claimed before, the second term in that formula is also of the same type with some slight changes when dealing with the last interval. The explicit calculations are left for the reader. This leads  to the following notation on the set $ \{N_T=n\} $ and $ J_i:=(0,...,0,I,e,...,e) $, $ i\leq n-1 $
%	\begin{align*}
%		\theta^{{J}_{i}}:=&\left(1+\frac{a(L) - a_{n-1}}{ a_{n-1}}\right)
%				\prod_{j={i}+1}^{n-1}
%		\theta^j\times
%		{\mathtt{I}}_{i}(
%		{\theta}^{{i}})\times
%		\prod_{l=1}^{{i}-1}{\theta}^l,\\
%		\theta^{{J}_{n}}:=&\widehat{{\mathcal{I}}}_{n+1}(1)
%		\left(1+\frac{a(L) - a_{n-1}}{ a_{n-1}}\right)
%		\prod_{l=1}^{n-1}{\theta}^l.
%	\end{align*}
	\begin{theorem}
		\label{th:8}
		Let $x>L$ and $ f\in \mathscr{C}^1_b(\mathbb{R}) $ such that $ f(T)=0 $. Then, it holds 
		\begin{align*}
			\E\left[f'(\tau\wedge T)\right]=&e^{\lambda T}
			\E\left[M^{-1}\sum_{i=1}^{N_T+1}	(\Delta_{i}\bar{X})^2 f(\zeta_{N_T}+ {\bar{\tau}^{N_T}})\I_\seq{ {\bar{\tau}^{N_T}} \leq T- \zeta_{N_T}}
			\theta^{I_i}
			\right].
			%			\\
			%			&+e^{\lambda T}
			%			\E\left[M^{-1}\sum_{i=1}^{N_T}	(\Delta_{i}\bar{X})^2 f(\zeta_{N_T-1}+ {\bar{\tau}^{N_T-1}})\I_\seq{ {\bar{\tau}^{N_T-1}} \leq T- \zeta_{N_T-1}}
			%			\theta^{J_i}
			%		\right].
		\end{align*}
		The random variable which appear within the expectation on the right-hand side above is integrable.
	\end{theorem}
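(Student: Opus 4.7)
The plan is to start from the probabilistic representation \eqref{eq:PRF1} applied to the derivative $f'$, which is bounded since $f\in\mathscr{C}^1_b(\mathbb{R})$. This gives
$$
\E[f'(\tau\wedge T)] = e^{\lambda T}\E\left[f'(\zeta_{N_T}+\bar{\tau}^{N_T})\I_{\{\bar{\tau}^{N_T}\leq T-\zeta_{N_T}\}}\widehat{\theta}^{N_T}\prod_{i=1}^{N_T}\theta^i\right].
$$
I would then switch to the $\mathtt{Y}$-parameterization via Lemma \ref{lem:8a}, since equality in law preserves the expectation, and insert the spatial stabilizer $1=M^{-1}\sum_{i=1}^{N^\tau_T+1}(\Delta_i\mathtt{Y})^2$ inside the conditional expectation $\E_{M,0,n}[\cdot]$ (which is well defined on $\{N^\tau_T=n\}$, since $M>0$ a.s.). This reduces the proof to transferring the derivative $f'$ onto the weights for each of the $N_T+1$ terms indexed by $i$, after which I apply Lemma \ref{lem:8a} in reverse to return to the $\bar X$-formulation.

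For $i\leq n$ on $\{N^\tau_T=n\}$, I iterate the transfer formula of Lemma \ref{lem:6} starting from the right end: using the definition of $G_k$ introduced just before the theorem, the formula $G'_{k-1}(\zeta_{k-1})\theta^k\stackrel{\E_{M,k-1,n}}{=}\partial_{\tau_{k-1}}G_k(\zeta_k)\theta^k=G_k(\zeta_k)\overleftarrow{\theta}^e_k/\ldots$ (combined with the boundary vanishing $G_k(T)=0$) lets me successively absorb the derivative $\partial_{\tau_n},\partial_{\tau_{n-1}},\dots,\partial_{\tau_i}$ into $\prod_{j=i+1}^{n}\overleftarrow{\theta}^e_j=\prod_{j=i+1}^{n}\theta^j$ without producing new boundary terms. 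Once the derivative sits on the $i$-th interarrival $\tau_i$, I apply the IBP formula of Lemma \ref{lem:10a} for $\tau_i$: the restriction $\{\zeta_i\leq T\}$ provides only an upper boundary Dirac mass which disappears due to the telescoping $G_i(T)=0$ (ultimately inherited from $f(T)=0$). This yields the weight $\mathtt{I}_i(\theta^i)$ multiplying the rest of the chain, producing exactly $\theta^{I_i}$ together with the factor $(\Delta_i\mathtt{Y})^2$ that was set aside by the stabilization.

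For the terminal index $i=n+1$, the derivative sits directly on $f'(\zeta_n+\bar\tau^n)$ and I apply the L\'evy-density IBP \eqref{eq:Last}, which produces the weight $\widehat{\mathcal{I}}_{n+1}(1)$. Combined with the factor $(\Delta_{n+1}\mathtt{Y})^2$, and the chain $\widehat\theta^n\prod_{l=1}^n\theta^l$ that has not been touched, this yields $\theta^{I_{n+1}}$. Summing the $n+1$ contributions and undoing the stabilization gives the announced formula.

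The main obstacle is to justify the exchange of $\sum_i$, conditional expectation, and the various IBP steps by establishing integrability of the final expression; this is also what is asserted in the last sentence of the theorem. The key inputs are the space degeneration estimates \eqref{eq:estthetai} for $\theta^i$ and $\mathtt{I}_i(\theta^i)$, estimate \eqref{eq:estTn} for $\widehat{\mathcal{I}}_{n+1}(1)$, and the inverse-moment bounds for $M^{-1}$ from Section \ref{sec:Mv}. Concretely, the factor $(\Delta_i\mathtt{Y})^2$ absorbs the negative powers of $|\Delta_i\mathtt{Y}|$ appearing in $\mathtt{I}_i(\theta^i)$ (or $\widehat{\mathcal{I}}_{n+1}(1)$), leaving only positive polynomial moments of $|\Delta_i\mathtt{Y}|=\sigma(\mathtt{Y}_{i-1})|\tilde Z_i|$, which are integrable since $\sigma$ is bounded and the $\tilde Z_i$ are symmetric exponential; as displayed in the estimate block preceding the theorem, this bounds the $n$-th contribution by $(n+1)C^n\|f\|_\infty\P(N_T=n)$, whose sum over $n$ converges. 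Combining these ingredients with Lemma \ref{lem:8a} to return to the original $\bar X$-variables concludes the proof.
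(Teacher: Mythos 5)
Your proposal matches the architecture the paper uses: start from the probabilistic representation, insert the spatial stabilizer $M^{-1}\sum_i(\Delta_i\mathtt Y)^2$, move the derivative of $f$ onto the weights by iterating the transfer formulas, perform one IBP per index $i$ (Lemma~\ref{lem:10a} for $i\le n$, the L\'evy IBP~\eqref{eq:Last} for $i=n+1$), and get integrability from the space degeneration estimates~\eqref{eq:estthetai}, \eqref{eq:estTn} together with the inverse moments of~$M$.

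There is, however, a genuine gap at the very first step. You apply~\eqref{eq:PRF1} to $f'$, but that identity only holds for a bounded measurable function that vanishes at $T$, and the hypothesis of the theorem is $f(T)=0$, not $f'(T)=0$. If one instead applies Theorem~\ref{th:st1} directly to $f'$ (which needs no vanishing assumption) and decomposes $f'((\zeta_{N_T}+\bar\tau^{N_T})\wedge T)$ over the event $\{\zeta_{N_T}+\bar\tau^{N_T}\ge T\}$, one obtains
\begin{equation*}
\E\big[f'(\tau\wedge T)\big]
= e^{\lambda T}\,\E\Big[f'(\zeta_{N_T}+\bar\tau^{N_T})\,\I_{\{\zeta_{N_T}+\bar\tau^{N_T}\le T\}}\,\widehat\theta^{N_T}\prod_{i=1}^{N_T}\theta^i\Big]
+ f'(T)\,\P(\tau\ge T),
\end{equation*}
where the last term follows from $e^{\lambda T}\E[\I_{\{\zeta_{N_T}+\bar\tau^{N_T}\ge T\}}\Gamma_{N_T}]=\P(\tau\ge T)$, obtained by applying Theorem~\ref{th:st1} to the function $\I_{\{t\ge T\}}$. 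For $x>L$ this extra term is a nonzero multiple of $f'(T)$, and it is never cancelled downstream: every boundary term you discard in the transfer/IBP chain vanishes because $f(T)=0$ (i.e.\ $G_k(T)=0$), not because of any condition on $f'$. A concrete check in the constant--coefficient case $b\equiv 0$, $\sigma\equiv1$ with $f(t)=T-t$ on $[0,T]$ (smoothly truncated outside) gives left-hand side $-1$ and right-hand side $-\P(\tau<T)\neq-1$. Your derivation therefore only proves the announced formula under the additional assumption $f'(T)=0$; to recover the stated result you must carry the boundary term $f'(T)\P(\tau\ge T)$ explicitly through the computation (the paper's own sketch, which defers to \cite{frikha:kohatsu:li} and only discusses removing $f(T)=0$, does not address this point either).
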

	The proof of the above result is similar to the case of stopped processes given in Theorem 5.1 of \cite{frikha:kohatsu:li}. Algebraically, the proof essentially follows from applying the recursive relation given in (5.2), the transfer formula and the iteration step given in (5.4) of Theorem 5.1 \cite{frikha:kohatsu:li}. The structure given here is much simpler compared to \cite{frikha:kohatsu:li} since all boundary terms are zero. Furthermore, the necessary inequalities and estimates are provided at the beginning of this section. It is important to note that integrability can be inferred from the space degeneration estimates outlined in Lemma \ref{lem:8}. Lastly, the assumption $f(T)=0$ can be readily eliminated by introducing the function $g(t) = f(t) - f(T)$.

	\section{Appendix}
	\subsection{The Malliavin variance}
	\label{sec:Mv}
	
	Define the Malliavin variance as
	\begin{align*}
		M:=	\sum_{{i}=1}^{N_T+1}\left(\Delta_{i}\bar{X}\right)^{2}.
	\end{align*}	
	
	We remark here that an equivalent definition using $(\rho, \mathtt{Y},\tau)$, instead of $(\rho,\bar{X},T)$ is possible and the study of the inverse moments is equivalent due to Lemma \ref{lem:8a} which states the equality in law. We prefer this version as we will be using geometrical properties of the path of $\bar{X}$.
	The derivation of the IBP formula necessitates the existence of inverse moments of the Malliavin variance on the set ${\bar{\tau}^{N_T} \leq T - \zeta_{N_T}}$. The reason for the inverse integrability is that each space increment $\Delta_{i}\bar{X}$ has a positive probability of being close to $\frac{L - x}{N_T}$. This provides a lower bound for the variable $M$, ensuring that its inverse is integrable in any $L^p(\mathbb{P})$ norm. This result is established in the following section.
	%	\textcolor{red}{If later we want a formula that ahs a limit choose $ k=4 $}
	\begin{lem}
		For any $ q>0 $, $ k\in\mathbb{N} $ and $ x>L $, we have that 
		$$ \E\left[\left(\sum_{{i}=1}^{N_T+1}\left(\Delta_{i}\bar{X}\right)^{2k}\right)^{-q} \I_\seq{\zeta_{N_T} + \bar{\tau}^{N_T}<T } \prod_{i=1}^{N_T+1}\I_\seq{\bar{X}_i>L}\right]<\infty. 
		$$
	\end{lem}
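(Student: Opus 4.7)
The plan is to establish a deterministic lower bound for the Malliavin variance on the restricted event and then close the argument using the finite moments of the Poisson variable $N_T$. First I would condition on $\{N_T = n\}$ and introduce the shifted variables $a_i := \bar{X}_i - L$, which are strictly positive for $i \leq N_T$ on the indicator event, with $a_0 = x - L > 0$ by assumption.

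The central deterministic inequality I aim to establish is
\begin{align*}
\sum_{i=1}^{n+1} |\Delta_i \bar{X}| \;\geq\; x - L \qquad \text{on } \bigcap_{i=0}^{n}\{\bar{X}_i > L\}.
\end{align*}
This follows by a short telescoping argument. A case analysis on $\rho_i \in \{0,1\}$, using $|\Delta_i \bar{X}| = |a_i - a_{i-1}|$ when $\rho_i = 1$ and $|\Delta_i \bar{X}| = a_i + a_{i-1}$ when $\rho_i = 0$, shows that
\begin{align*}
|\Delta_i \bar{X}| + a_i \;\geq\; a_{i-1} \qquad \text{for every } i \leq n.
\end{align*}
Setting $S_k := \sum_{i=1}^k |\Delta_i \bar{X}|$, this says that $S_k + a_k$ is nondecreasing in $k$, so $S_n + a_n \geq a_0 = x - L$. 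Adding $|\Delta_{n+1}\bar{X}| = a_n$ yields the announced bound. Geometrically, this is a reflected triangle inequality that correctly absorbs the Bernoulli reflections $\rho_i$.

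Once this lower bound is in hand, the standard $\ell^1$--$\ell^{2k}$ norm comparison on $\mathbb{R}^{n+1}$ gives
\begin{align*}
\sum_{i=1}^{n+1}(\Delta_i \bar{X})^{2k} \;\geq\; (n+1)^{1-2k}\left(\sum_{i=1}^{n+1}|\Delta_i \bar{X}|\right)^{2k} \;\geq\; (n+1)^{1-2k}(x-L)^{2k},
\end{align*}
so the integrand is pointwise bounded on the prescribed event by $(x-L)^{-2kq}(N_T + 1)^{(2k-1)q}$. Summing over $n$ and using that $N_T$ is Poisson (hence has moments of all orders) produces a finite value.

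The only nontrivial step is the reflected triangle inequality $|\Delta_i \bar{X}| + a_i \geq a_{i-1}$, which is a deterministic two-line verification. Notably, this pathwise approach bypasses the probabilistic heuristic mentioned in the paper (that each $\Delta_i \bar{X}$ be close to $(L-x)/N_T$ with positive probability): the geometric constraint imposed by $\prod_i \mathbf{1}_{\{\bar{X}_i > L\}}$ already forces enough total displacement to bound $M$ from below uniformly on the event of integration.
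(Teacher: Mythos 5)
Your proof is correct and follows essentially the same route as the paper: both arguments reduce to the deterministic lower bound $\sum_{i=1}^{N_T+1}|\Delta_i\bar{X}| \geq x-L$ on the constraint event $\bigcap_i\{\bar{X}_i>L\}$, from which the inverse moment is controlled by a power of $N_T+1$ and finiteness follows from Poisson moments. Your telescoping inequality $|\Delta_i\bar{X}| + a_i \geq a_{i-1}$ is a clean formalization of the geometric argument the paper sketches verbally, and your use of the $\ell^1$--$\ell^{2k}$ norm comparison in place of the paper's pigeonhole step even yields the marginally sharper exponent $(N_T+1)^{(2k-1)q}$ versus $(N_T+1)^{2kq}$.
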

	\begin{proof}
		%	We consider the case $ k=1 $, the general case follows similarly. 
		One  considers the set  \begin{align*}
			A:=\left \{\exists {i}\in\{1,...,N_T+1\};\ \left|\Delta_{i}\bar{X}
			\right| \geq \frac{L-x}{N_T +1}\right \}.
		\end{align*} 
		Note that {$ \mathbb{P}(A^c, \bar{X}_i > L, i=1,\cdots, N_T)=0 $} because the following inequality is satisfied 
		\begin{align*}
			\sum_{{i}=1}^{N_T+1}\left |\Delta_{i}\bar{X}\right| \geq L-x,
		\end{align*}
		\noindent on the set $\left\{\bar{X}_i > L, i=1, \cdots, N_T\right\}$. The proof of the above inequality can be explained easily in geometrical terms.  In the case that $ \rho_{i}=1 $ then $ |\Delta_{i}\bar{X}
		| $ stands for the distance between the two points $ \bar{X}_{i} $ and $ \bar{X}_{{i}-1} $. In the case that $ \rho_{i}=0 $ then the value $ \bar{X}_{{i}-1} $ is reflected with respect to the line $ y=L $ but it also has to finish at a point $  \bar{X}_{i} >L$ and therefore $ | \bar{X}_{i}-(2L-\bar{X}_{{i}-1})| $ is larger than $ |\bar{X}_{i}-\bar{X}_{{i}-1}| $ . As the initial point and final point of the process $ \bar{X} $ are fixed at $ x $ and $ L $ respectively, the above inequality follows.
		
		%			In fact, the above equality is proved by induction using that for $ {i}<N_T $ that 
		%				\begin{align*}
		%				\bar{X}_{{{i}+1}}-\bar{X}_{{i}}=2(L-\bar{X}_{{i}})(1-\rho_{{i}+1})+\sigma_{{i}}\Delta_{{i}+1}{W}
		%				\end{align*}
		%				and for $ {i}=N_T $, we have 
		%				
		%				\begin{align*}
		%				\bar{X}_{N_T}=&
		%				(L-\bar{X}_{{N_T}})(2\rho_{
		%	N_T+1}-1)-\sigma_{N_T}\Delta W_{\zeta_{N_T+1}}
		%				\end{align*}
		%				
		Then, using Chebyshev's inequality, one obtains:
		\begin{align*}
			\E\left[\left(\sum_{{i}=1}^{N_T+1}\left(\Delta_{i}\bar{X}\right)^{2k}\right)^{-q}  \I_\seq{\zeta_{N_T} + \bar{\tau}^{N_T}<T } \prod_{i=1}^{N_T+1} \I_\seq{\bar{X}_i>L}\right]\leq  \E\left[\frac{(N_T+1)^{2qk+1}}{(L-x)^{2qk}}\I_A\right]<\infty.
		\end{align*}
	\end{proof}
	\subsection{Auxiliary lemmas for L\'evy and generalized inverse Gaussian distributions}
	\
	\begin{lem}
		\label{lem:10}Let $ f:\mathbb{R}_+\to\mathbb{R} $ be a measurable and bounded function. Then, it holds 
		\begin{align*}
			\E\left[(2\rho_1-1)\mathcal{I}_{1}(1)\delta_L(\bar{X}_{1})f(\zeta_1)\,|\,N_T=1\right]=-T^{-1}\E[f(\bar{\tau}^x)\I_\seq{\bar{\tau}^x\leq T}].
		\end{align*}
	\end{lem}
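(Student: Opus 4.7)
I would prove this lemma by a direct computation exploiting the explicit Gaussian structure available when there is exactly one jump of the Poisson process in $[0,T]$. The strategy is: (i) condition on $(\rho_1,\zeta_1)$; (ii) apply the IBP/conditional-law identity displayed just after \eqref{eq:IBP} to turn $\E[\mathcal{I}_1(1)\delta_L(\bar X_1)\mid \rho_1,\zeta_1]$ into a conditional mean weighted by the density of $\bar X_1$ at $L$; (iii) symmetrize using the $(2\rho_1-1)$ factor to eliminate the reflection Bernoulli; and (iv) recognize the remaining $ds$-integrand as the L\'evy density of $\bar\tau^x$.

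Concretely, set $m_{\rho_1} := \rho_1 x + (1-\rho_1)(2L-x)$. Then $\bar X_1 = m_{\rho_1} + \sigma(x)W_{\zeta_1}$ and $\mathcal{I}_1(1) = (\bar X_1 - m_{\rho_1})/(a(x)\zeta_1)$ is affine in $\bar X_1$. Since the density of $\bar X_1$ at $L$ is $g(a(x)\zeta_1, L - m_{\rho_1}) = g(a(x)\zeta_1, L-x)$ by evenness of $g$ in its second argument, the conditional-law form of the IBP yields
$$
\E\!\left[\mathcal{I}_1(1)\delta_L(\bar X_1)\,\big|\,\rho_1,\zeta_1\right]
= \frac{L - m_{\rho_1}}{a(x)\zeta_1}\,g\!\left(a(x)\zeta_1,\, L-x\right).
$$
Writing $L - m_{\rho_1} = (2\rho_1-1)(L-x)$ and using $(2\rho_1-1)^2 = 1$, multiplication by $(2\rho_1-1)$ removes the Bernoulli dependence, leaving an expression depending only on $\zeta_1$.

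On $\{N_T=1\}$, the jump time $\zeta_1$ is uniformly distributed on $[0,T]$ (a classical property of the Poisson process conditioned on the number of jumps), so integrating against $f(\zeta_1)$ reduces the left-hand side to $\frac{1}{T}\int_0^T \frac{L-x}{a(x)s}\,g(a(x)s, L-x)\,f(s)\,ds$. The final step is to identify the integrand with $-T^{-1} p_{\bar\tau^x}(s)f(s)$, using the explicit L\'evy density $p_{\bar\tau^x}(s) = (x-L)(2\pi a(x)s^3)^{-1/2}\exp(-(x-L)^2/(2a(x)s))$ recorded in Lemma \ref{lem:8a}. Recognizing the resulting integral as $\E[f(\bar\tau^x)\I_{\seq{\bar\tau^x\leq T}}]$ then delivers the announced identity.

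The only delicate point is step (iv): one must keep careful track of the various prefactors in $\sigma(x)$ and $a(x)$ appearing in the Gaussian density $g(a(x)s,\cdot)$, in the weight $\mathcal{I}_1(1)$, and in the L\'evy density $p_{\bar\tau^x}$, so that the factor $(x-L)/s$ recombines into $p_{\bar\tau^x}(s)$ with exactly the prefactor $-T^{-1}$. This is pure bookkeeping: the main conceptual content is in steps (ii) and (iii), where the IBP formula and the $(2\rho_1-1)$ symmetrization collapse the calculation to a one-dimensional integral whose kernel is the explicit first-passage density.
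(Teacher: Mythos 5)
Your approach is exactly the one the paper takes: the paper's proof is a one-line explicit computation that conditions on $\zeta_1$ (uniform on $[0,T]$ given $N_T=1$), uses the Gaussian density of $\bar X_1$ together with the $(2\rho_1-1)$-symmetrization to remove the Bernoulli, and identifies the remaining $\d s$-kernel with the L\'evy first-passage density of $\bar\tau^x$. Your steps (i)--(iv) reproduce this faithfully.

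There is, however, a concrete issue hiding in the step you flag as ``pure bookkeeping,'' and it is worth naming because it persists into the paper's own proof. Carrying through the constants with $c=x-L$ and $v=a(x)$: since $\mathcal{I}_1(1)=(\bar X_1-m_{\rho_1})/(v\zeta_1)$ and $\bar X_1\sim\mathcal{N}(m_{\rho_1},v\zeta_1)$, one gets
\begin{align*}
(2\rho_1-1)\,\E\bigl[\mathcal{I}_1(1)\delta_L(\bar X_1)\,\big|\,\rho_1,\zeta_1=s\bigr]
&= -\frac{c}{v s}\cdot\frac{1}{\sqrt{2\pi v s}}\,e^{-c^2/(2vs)}
= -\frac{1}{v}\cdot\frac{c}{\sqrt{2\pi v s^3}}\,e^{-c^2/(2vs)}
= -\frac{1}{a(x)}\,p_{\bar\tau^x}(s),
\end{align*}
so the integrand you obtain is $-\bigl(a(x)T\bigr)^{-1}p_{\bar\tau^x}(s)f(s)$, not $-T^{-1}p_{\bar\tau^x}(s)f(s)$. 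That is, the identification $\frac{L-x}{a(x)s}\,g(a(x)s,L-x)=-p_{\bar\tau^x}(s)$ that you assert in step (iv) is off by a factor of $a(x)^{-1}$. This extra $a(x)^{-1}$ is in fact consistent with the analogous passage preceding \eqref{prob:representation:before:iteration}, where the weight $\frac{a(L)-a(x)}{a(x)}$ (with $a(x)$ in the denominator) appears explicitly; the Lemma's displayed identity seems to have dropped this prefactor, and your proposal, like the paper's sketch, glosses over this. So: same route as the paper, correct through the symmetrization and the reduction to a one-dimensional integral, but the final constant does not recombine cleanly, and you should either insert the missing $a(x)^{-1}$ or explain why it is absorbed elsewhere.
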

	\begin{proof}
		The proof consists of explicit calculations. In fact, 
		\begin{align*}
			\E\left[(2\rho_1-1)\mathcal{I}_{1}(1)\delta_L(\bar{X}_{1})f(\zeta_1)\,|\,N_T=1\right]&=T^{-1}\int_0^T \frac{L-x}{\sigma(x)s}g_s(x,L)f(s) \, \d s.
		\end{align*}
		The above equals $-T^{-1}\E[f(\bar{\tau}^x)\I_\seq{\bar{\tau}^x\leq T}]  $ because the density function of $ \bar{\tau}^x $ is explicitly known and given by 
		$ \frac{x-L}{\sigma(x)s}g_s(x,L) $	.
	\end{proof}
	\begin{lem}
		\label{lem:8}For $ k\in\mathbb{N} $ and $ 1\leq i\leq n\in\mathbb{N} $,
		\begin{align*}
			\E_{1,n,n}[(\bar{\tau}^n)^{-k}\I_\seq{\zeta_n+\bar{\tau}^n\leq T}] & = \E[(\bar{\tau}^n)^{-k}\I_\seq{\zeta_n+\bar{\tau}^n\leq T}\I_\seq{ N_T=n}\,|\,\mathcal{F}_{\zeta_n}, \mathtt{Y},\rho]\leq 
			C(\Delta_{n+1} \mathtt{Y})^{-2k} ,\\
			%			\quad \textcolor{red}{C(\Delta_{n+1} \mathtt{Y})^{-k}} ,\\
			%\E\left[{\tau_i}^{-1} \I_\seq{ N_T=n}/\mathcal{F}_{\zeta_{i-1}},\bar{X},\rho\right]\leq &\frac{\sqrt{2\lambda}\sigma_{i-1}}{|\Delta_i\bar{X}|},\\
			%\E\left[{\tau_i}^{-2} \I_\seq{ N_T=n}/\mathcal{F}_{\zeta_{i-1}},\bar{X},\rho\right]\leq &\frac{\sqrt{2\lambda}\sigma^3_{i-1}}{|\Delta_i\bar{X}|^3}+\frac{2\lambda a_{i-1}}{|\Delta_i \mathtt{Y}|^2},\\
			%\E\left[{\tau_i}^{-3} \I_\seq{ N_T=n}/\mathcal{F}_{\zeta_{i-1}},\bar{X},\rho\right]\leq &\left(\frac{2\lambda a_{i-1}}{|\Delta_i\bar{X}|^2}\right)^{3/2}
			%\left(1+\frac{3\sigma_{i-1}}{\sqrt{2\lambda}|\Delta_i\bar{X}|}+\frac{6a_{i-1}}{2\lambda |\Delta_i \mathtt{Y}|^2}\right),\\
			\E_{1.i-1,n}\left[\tau_i^{-k} \right] &  = \E\left[{\tau_i}^{-k} \I_\seq{ N_T=n}\,|\,\mathcal{F}_{\zeta_{i-1}}, \mathtt{Y},\rho\right]\leq  {C\left(1+ {|\Delta_i \mathtt{Y}|^{1-2k}}\right)}.
		\end{align*}
	\end{lem}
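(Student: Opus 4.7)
The plan is to prove both bounds by direct computation against the explicit densities given in Section~3 and in the statement of Lemma~\ref{lem:8a}. Since the distributions involved (the Lévy law for $\bar{\tau}^n$ and the GIG$(2\lambda,2\lambda\mu_i^2,1/2)$ law for $\tau_i$) are both known in closed form and of comparable analytic type, both estimates will reduce to one-dimensional integral estimates where the relevant scaling parameter is $|\Delta_i\mathtt{Y}|$ (or $|\Delta_{n+1}\mathtt{Y}|$). The goal in each case is to extract the correct power of this space-increment while keeping all remaining integrals bounded uniformly in $\mathtt{Y}$ and $\rho$, using Assumption~\textbf{(H)} which ensures $\underline{a}\le a_n\le\overline{a}$.

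For the first bound, I will use that conditional on $\mathtt{Y}$ and $\rho$ the variable $\bar{\tau}^n$ has density $p_{\bar{\tau}^n}(s)=|\Delta_{n+1}\mathtt{Y}|(2\pi a_n s^{3})^{-1/2}\exp(-(\Delta_{n+1}\mathtt{Y})^2/(2a_ns))$. Writing
\begin{equation*}
\E_{1,n,n}[(\bar{\tau}^n)^{-k}\I_\seq{\zeta_n+\bar{\tau}^n\le T}]\;\le\;\frac{|\Delta_{n+1}\mathtt{Y}|}{\sqrt{2\pi a_n}}\int_0^\infty s^{-k-3/2}\exp\!\Bigl(-\frac{(\Delta_{n+1}\mathtt{Y})^2}{2a_n s}\Bigr)\,\d s,
\end{equation*}
the substitution $u=(\Delta_{n+1}\mathtt{Y})^2/(2a_ns)$ turns this into a Gamma integral with value a constant multiple of $(\Delta_{n+1}\mathtt{Y})^{-2k}$, where the constant depends only on $k$, $\underline{a}$ and $\overline{a}$. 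This gives the desired first inequality.

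For the second bound, I will use the closed form of the GIG$(2\lambda,2\lambda\mu_i^2,1/2)$ density that follows from $K_{1/2}(z)=\sqrt{\pi/(2z)}\,e^{-z}$ (the identity already invoked in Lemma~\ref{lem:8a}), which yields $f_{\tau_i}(x)=\sqrt{\lambda/\pi}\,e^{2\lambda\mu_i}\,x^{-1/2}\exp(-\lambda x-\lambda\mu_i^2/x)$ for $x>0$. Then I would split the analysis into the two regimes $\mu_i\le 1$ and $\mu_i>1$ (equivalently in $|\Delta_i\mathtt{Y}|$, since $\mu_i$ is proportional to $|\Delta_i\mathtt{Y}|$ with a bounded constant). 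For $\mu_i\le 1$ I would bound $e^{2\lambda\mu_i}$ by a constant, split $\int_0^\infty=\int_0^1+\int_1^\infty$, estimate the tail by $e^{-\lambda x}$, and on $(0,1)$ perform the substitution $u=\lambda\mu_i^2/x$ to produce the factor $\mu_i^{1-2k}$ times an incomplete Gamma integral that is uniformly bounded. For $\mu_i>1$, the substitution $x=\mu_i y$ gives
\begin{equation*}
\E_{1,i-1,n}[\tau_i^{-k}]\;=\;\sqrt{\lambda/\pi}\,\mu_i^{-k+1/2}\int_0^\infty y^{-k-1/2}\exp\!\bigl(-\lambda\mu_i(\sqrt{y}-1/\sqrt{y})^2\bigr)\,\d y,
\end{equation*}
and concentration of the measure around $y=1$ (via the Taylor bound $(\sqrt{y}-1/\sqrt{y})^2\gtrsim(y-1)^2$ together with a Gaussian-type estimate on the remaining tail) shrinks the integral like $\mu_i^{-1/2}$, yielding $\E_{1,i-1,n}[\tau_i^{-k}]\le C\mu_i^{-k}\le C$. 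Combining both regimes gives $\E_{1,i-1,n}[\tau_i^{-k}]\le C(1+|\Delta_i\mathtt{Y}|^{1-2k})$.

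The routine part is the substitution in the first inequality and the small-$\mu_i$ regime of the second; the only mildly delicate step I expect is the saddle-point type bound in the large-$\mu_i$ regime of the GIG integral, where one must verify that the integral is genuinely $O(\mu_i^{-1/2})$ rather than merely bounded. This, however, follows from elementary Gaussian tail estimates after the change of variable $y=1+z/\sqrt{\mu_i}$ on a neighbourhood of the minimum and a crude exponential bound outside it, so no deep argument is needed.
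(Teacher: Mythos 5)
Your argument is correct and yields both inequalities, but it takes a genuinely different route from the paper's proof, chiefly for the second bound. For the L\'evy part the paper also starts from the explicit density, but instead of your exact $\Gamma$-function computation via the substitution $u=(\Delta_{n+1}\mathtt{Y})^2/(2a_n s)$, it uses the pointwise inequality $x^{2k}e^{-cx^2}\le (k/(2c))^k e^{-k}$ with $c=(4a_ns)^{-1}$ to cancel the $s^{-k}$ against part of the Gaussian factor, reducing the integrand to a normalizable L\'evy density; the two calculations produce the same $|\Delta_{n+1}\mathtt{Y}|^{-2k}$ bound, and yours is if anything more transparent. For the GIG part the paper does not do any saddle-point analysis: it observes that $x^{-k}$ times the $\mathrm{GIG}(2\lambda,2\lambda\mu_i^2,1/2)$ density is a renormalized $\mathrm{GIG}(2\lambda,2\lambda\mu_i^2,\tfrac12-k)$ density, so that the inverse moment is exactly $(a/b)^{k/2}K_{1/2-k}(\sqrt{ab})/K_{1/2}(\sqrt{ab})$, and then uses the closed form for half-integer-order Bessel functions $K_\nu$ to read off the bound $\mu_i^{-k}\sum_{j=0}^{k-1}c_j\,(4\lambda\mu_i)^{-j}\le C(1+\mu_i^{1-2k})$. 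Your two-regime direct estimate (small $\mu_i$: drop $e^{-\lambda x}$, substitute $u=\lambda\mu_i^2/x$, obtain $\mu_i^{1-2k}$; large $\mu_i$: rescale $x=\mu_i y$ and extract $\mu_i^{-1/2}$ by a Laplace-type estimate near $y=1$) is more hands-on but entirely sound; the one place to be precise is that $(\sqrt y-1/\sqrt y)^2=(y-1)^2/y$ is only comparable to $(y-1)^2$ on a compact neighbourhood of $y=1$, with the tails controlled separately by the super-exponential decay of $e^{-\lambda\mu_i/y}$ near $0$ and $e^{-\lambda\mu_i y}$ near $\infty$, which you do acknowledge. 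In short, the paper's argument buys brevity by leaning on the GIG/Bessel closed forms already set up in its Appendix, while yours is more elementary and self-contained and exposes the $\mu_i^{-k}$ decay in the large-$\mu_i$ regime, which the Bessel formula also gives but less visibly.
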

	The proof of the above statement is done using explicit integral expressions of inverse moments related to the L\'evy distribution and generalized inverse Gaussian distributions on $ \mathbb{R}_+ $. In fact, the first inequality follows using the explicit density of $ \bar{\tau}^n $ together with the inequality $x^{2k} e^{-cx^2} \leq \left(\frac{k}{2 c}\right)^{k} e^{-k}$. 
	Therefore, applying this inequality to
	\begin{align*}
		\E[(\bar{\tau}^n)^{-k}\I_\seq{\zeta_n+\bar{\tau}^n\leq T}
		%\I_\seq{ N_T=n}
		\,|\,{N_T 
			= n }, \mathcal{F}_{\zeta_n}, \mathtt{Y},\rho]=\int_0^{T-\zeta_n}s^{-k}\frac{|\Delta_{n+1}  \mathtt{Y}|}{\sqrt{2\pi}(a_ns)^{3/2}}\exp\left(-\frac{(\Delta_{n+1}  \mathtt{Y})^2}{2a_ns}\right) \, \d s,
	\end{align*}
	one obtains the result in the first case using the inequality for $ c=(4a_ns)^{-1} $.

	The second inequality follows using the explicit form of the  $\mathrm{GIG}$ distribution. The expectation of $ \tau_i^{-k} $ can be reinterpreted using the GIG distribution with parameters $p = -k+\frac{1}{2}$, $a = 2\lambda$, $b = 2\lambda \mu_i^2$ or, in other words, the GIG$( 2\lambda, 2\lambda \mu_i^2,-k+\frac{1}{2})$ distribution.  This introduces a change of normalization constant which gives the inequality.  When doing these calculations, one uses the following facts about modified Bessel functions of the second kind $ K_{3/2}(x)=(1+x^{-1})K_{1/2}(x)= (1+x^{-1})\sqrt{\frac{\pi}{2x}}e^{-x}$. In general, one may write for
	$ \nu-\frac 12\in\mathbb{Z} $, (see, for example, page 925 in \cite{gradshteyn2007})
	\begin{align*}
		K_{\nu}(x)=K_{|\nu|}(x)=K_{1/2}(x)\sum_{j=0}^{\lfloor{|\nu|-\frac 12}\rfloor}\frac{(j+|\nu|-\frac 12)!}{j!(-j+|\nu|-\frac 12)!}(2x)^{-j}.
	\end{align*}
	In fact, the GIG density multiplied by $x^{-k}$ is given by
	\begin{align*}
		& \frac{1}{x^k} \frac{(a/b)^{p/2}}{K_p(\sqrt{ab})} x^{p-1} e^{-(ax+ b/x)/2}\\
		& = \frac{(a/b)^{k/2}K_{p-k}(\sqrt{ab})}{K_p(\sqrt{ab})}\left[\frac{(a/b)^{(p-k)/2}}{K_{p-k}(\sqrt{ab})} x^{p-k-1} e^{-(ax+ b/x)/2}\right].
	\end{align*}
	Here $\sqrt{ab} = 2\lambda \mu_i$ and $a/b = 1/\mu^2_i$. Hence, recalling that $p = \frac{1}{2}$, the previous quantity is upper-bounded by
	\begin{align*}
		\mu_i^{-k} \sum_{j=0}^{\lfloor{|\nu|-\frac 12}\rfloor}\frac{(j+|\nu|-\frac 12)!}{j!(-j+|\nu|-\frac 12)!}(4\lambda \mu_i)^{-j} \left[\frac{(a/b)^{(p-k)/2}}{K_{p-k}(\sqrt{ab})} x^{p-k-1} e^{-(ax+ b/x)/2}\right].
	\end{align*}
	Note that as $|\nu| - \frac{1}{2}=k-1$, the second inequality in the statement of the lemma follows using the fact that $ |\Delta_i\mathtt{Y}|^{-j-k}\leq C(1+ |\Delta_i\mathtt{Y}|^{-2k+1})$ for all $ j=0,...,\lfloor{|\nu|-\frac 12}\rfloor$. 
	%	\section{The derivative of $\bar{\tau}^x $}
	%	
	%	Recall that the density of $\bar{\tau}^x $ is given by 
	%	$$
	%	f_{\bar{\tau}^x}(s) =  \frac{x-L}{\sqrt{2\pi a(x)}s^{3/2}} \exp\left(- \frac{(L-x)^2}{2a(x)s} \right) \I_{\seq{x\geq L}}.
	%	$$
	%	Then, for any $x>L$ and any function $ f\in C^1_b((0,\infty)) $ such that \textcolor{red}{($ f'(\tau)=o(\tau^{-1/2}) $ as $ \tau\to\infty $: I think this is not enough)} $\tau \mapsto f'(\tau) \tau^{-\frac12} \in L^1([\varepsilon, \infty)$ for some $\varepsilon >0$ or \textcolor{red}{better: $\mathbb{E}[|f'(\bar{\tau}^x)| \bar{\tau}^x] < \infty$ for any $x>L$}, it holds
	%	\begin{align*}
	%		\partial_x\mathbb{E}[f(\bar{\tau}^x)]=2\mathbb{E}\left [f'(\bar{\tau}^x)\frac{\bar{\tau}^x}{x-L}\left(1-\frac{\sigma'(x)}{\sigma(x)}(x-L)\right)\right ]
	%	\end{align*}
	%	The proof follows from direct computation once one uses the fact that $\bar{\tau}^x \stackrel{d}{=} \frac{(L-x)^2}{a(x)} \bar{\tau}_1$ where $\bar{\tau}_1$ is the first hitting time of the level $1$ by $W$. %in particular, we use that 
	%	%\begin{align*}
	%	%	\lim_{\substack{s\to 0\\
	%	%	s\to\infty}}f(s)\frac{1}{\sqrt{s}}e^{-\frac{(x-L)^2}{2a(x)s}}=0.
	%	%\end{align*}
	%	The condition on $ f' $ ensures the integrability of the random variable on the right-hand side of the above identity.
	\bibliographystyle{abbrv}
	\bibliography{bibli} 

\def\cprime{$'$} \def\cprime{$'$}
\begin{thebibliography}{10}

\bibitem{AM}
H.~Airault, P.~Malliavin, and J.~Ren.
\newblock Smoothness of stopping times of diffusion processes.
\newblock {\em J. Math. Pures Appl. (9)}, 78(10):1069--1091, 1999.

\bibitem{BAKS}
G.~B. Arous, S.~Kusuoka, and D.~W. Stroock.
\newblock The {P}oisson kernel for certain degenerate elliptic operators.
\newblock {\em Journal of Functional Analysis}, Volume 56, Issue 2, April 1984,
  Pages 171-209.

\bibitem{BallyBavouzet}
V.~Bally, M.-P. Bavouzet, and M.~Messaoud.
\newblock Integration by parts formula for locally smooth laws and applications
  to sensitivity computations.
\newblock {\em Ann. Appl. Probab.}, 17(1):33--66, 2007.

\bibitem{ballybook}
V.~Bally, L.~Caramellino, R.~Cont, F.~Utzet, and J.~Vives.
\newblock {\em Stochastic Integration by Parts and Functional It{\^o}
  Calculus}.
\newblock Advanced Courses in Mathematics - CRM Barcelona. Springer
  International Publishing, 2016.

\bibitem{BallyClement}
V.~Bally and E.~Cl{\'e}ment.
\newblock Integration by parts formula and applications to equations with
  jumps.
\newblock {\em Probab. Theory Related Fields}, 151(3-4):613--657, 2011.

\bibitem{Bismut}
J.~M. Bismut.
\newblock Last exit decompositions and regularity at the boundary of transition
  probabilities.
\newblock {\em Zeitschrift f{\"u}r Wahrscheinlichkeitstheorie und Verwandte
  Gebiete}, 69(1):65--98, Mar 1985.

\bibitem{CHAUDRUDERAYNAL20211}
P.-E. {Chaudru de Raynal} and N.~Frikha.
\newblock From the backward kolmogorov pde on the wasserstein space to
  propagation of chaos for mckean-vlasov sdes.
\newblock {\em Journal de Math\'ematiques Pures et Appliqu\'ees}, 156:1--124,
  2021.

\bibitem{CHAUDRUDERAYNAL20221}
P.-E. {Chaudru de Raynal} and N.~Frikha.
\newblock Well-posedness for some non-linear sdes and related pde on the
  wasserstein space.
\newblock {\em Journal de Math\'ematiques Pures et Appliqu\'ees}, 159:1--167,
  2022.

\bibitem{chen:frikha:li}
J.~Chen, N.~Frikha, and H.~Li.
\newblock Probabilistic representation of integration by parts formulae for
  some stochastic volatility models with unbounded drift.
\newblock {\em ESAIM: Probability and Statistics}, 26:304--351, 2022.

\bibitem{Denis}
L.~Denis and T.~M. Nguyen.
\newblock Malliavin calculus for {M}arkov chains using perturbations of time.
\newblock {\em Stochastics}, 88(6):813--840, 2016.

\bibitem{FP}
N.~Fournier and J.~Printems.
\newblock Absolute continuity for some one-dimensional processes.
\newblock {\em Bernoulli}, 16(2):343--360, 05 2010.

\bibitem{FKL1}
N.~Frikha, A.~Kohatsu-Higa, and L.~Li.
\newblock On the first hitting times of one dimensional elliptic diffusions.
\newblock {\em arXiv preprint arXiv:1609.09327}, 2016.

\bibitem{frikha:kohatsu:li}
N.~Frikha, A.~Kohatsu-Higa, and L.~Li.
\newblock Integration by parts formula for killed processes: a point of view
  from approximation theory.
\newblock {\em Electron. J. Probab.}, 24(95):1--44, 2019.

\bibitem{10.1214/19-AIHP992}
N.~Frikha and L.~Li.
\newblock {Weak uniqueness and density estimates for SDEs with coefficients
  depending on some path-functionals}.
\newblock {\em Annales de l'Institut Henri Poincar\'e, Probabilit\'es et
  Statistiques}, 56(2):1002 -- 1040, 2020.

\bibitem{gradshteyn2007}
I.~S. Gradshteyn and I.~M. Ryzhik.
\newblock {\em Table of integrals, series, and products}.
\newblock Elsevier/Academic Press, Amsterdam, seventh edition, 2007.
\newblock Translated from the Russian, Translation edited and with a preface by
  Alan Jeffrey and Daniel Zwillinger, With one CD-ROM (Windows, Macintosh and
  UNIX).

\bibitem{IW}
N.~Ikeda and S.~Watanabe.
\newblock {\em Stochastic differential equations and diffusion processes},
  volume~24 of {\em North-Holland Mathematical Library}.
\newblock North-Holland Publishing Co., Amsterdam; Kodansha, Ltd., Tokyo,
  second edition, 1989.

\bibitem{Karatzas1991}
I.~Karatzas and S.~E. Shreve.
\newblock {\em Brownian motion and stochastic calculus}.
\newblock Springer, 2 edition, 1991.

\bibitem{kent1980eigenvalue}
J.~T. Kent.
\newblock Eigenvalue expansions for diffusion hitting times.
\newblock {\em Zeitschrift f{\"u}r Wahrscheinlichkeitstheorie und Verwandte
  Gebiete}, 52(3):309--319, 1980.

\bibitem{kent1982spectral}
J.~T. Kent.
\newblock The spectral decomposition of a diffusion hitting time.
\newblock {\em The Annals of Probability}, pages 207--219, 1982.

\bibitem{pitmanyor}
J.~Pitman and M.~Yor.
\newblock Hitting, occupation and inverse local times of one-dimensional
  diffusions: martingale and excursion approaches.
\newblock {\em Bernoulli}, 9(1):1--24, 02 2003.

\end{thebibliography}
\end{document}